\documentclass[12pt]{article}
\usepackage{hyperref}
\usepackage{amsmath}
\usepackage{amsfonts}
\usepackage{amsthm}
\usepackage{indentfirst}
\usepackage{geometry}
\usepackage{color}

\renewcommand{\theequation}{\thesection.\arabic{equation}}
\usepackage{indentfirst}
\date{}
\makeatletter
\@addtoreset{equation}{section}

\newtheorem{theorem}{Theorem}[section]
\newtheorem{lemma}[theorem]{Lemma}

\begin{document}
    \begin{center}
    {\Large  Radial symmetry of positive solutions of an integral system
    associated with the reversed Stein-Weiss inequality}
\end{center}

\vskip 5mm

\begin{center}
    {\sc Tiantian Zhou \quad and \quad Yutian Lei }
\end{center}

\vskip 5mm {\leftskip5mm\rightskip5mm \normalsize
    \noindent{\bf{Abstract}}
    Whether the solutions of conformal equations in the whole space
    are radially symmetric is an interesting topic. Chen-Li-Ou
    proved the radial symmetry for integral systems of the Hardy-Littlewood-Sobolev
    type and the Stein-Weiss type by the method of moving planes in integral form.
    In 2015, Dou-Zhu obtained the radial symmetry of extremal functions of the
    reversed Hardy-Littlewood-Sobolev inequality by the method of moving spheres,
    and Liu proved the radial symmetry of solutions of the Euler-Lagrange system
    by the method of moving planes developed by Dou-Guo-Zhu.
    In this paper, we also use the method of moving planes to prove the radial symmetry
    of positive solutions of the Euler-Lagrange system satisfied by the extremal functions
    of the reversed Stein-Weiss inequality established by Chen-Liu-Lu-Tao in 2018.

    \par
    \noindent{\bf{Keywords}} method of moving planes, radial symmetry,
    reversed Stein-Wiess inequality
    \par
    {\bf{MSC}2020} 45G15; 45E10; 26D15; 45M05}

\renewcommand{\theequation}{\thesection.\arabic{equation}}
\catcode`@=11
\@addtoreset{equation}{section}
\catcode`@=12

    \section{Introduction}

    The Hardy-Littlewood-Sobolev inequality states that (cf. Theorem 1
    in Chapter 5 of \cite{S} or Theorem 4.3 in \cite{LL})
    \begin{equation}\label{hls}
    \left|\int_{\mathbb{R}^n}\int_{\mathbb{R}^n}\frac{f(x)g(y)}{|x-y|^{\lambda}}dxdy \right|
    \leq C_{n,\lambda}\|f\|_{L^{r}(\mathbb{R}^n)}\|g\|_{L^s(\mathbb{R}^n)}
    \end{equation}
    holds for all $(f,g)\in L^{r}(\mathbb{R}^n)\times L^s(\mathbb{R}^n)$,
    where $0<\lambda<n$, $1<r,s<\infty$, and $1/r+1/s+\lambda/n=2$.
    The best constants and extremal functions
    play important roles in the theory of nonlinear PDEs and mathematical physics.
    For example, $C_{n,\lambda}$ can be used to estimate the upper bound of Coulomb energy
    in the Thomas-Fermi model (cf. \cite{Lieb}).
    It is also associated with the study of steady-state solutions of porous media equations (cf. \cite{CCL}).
    In 1983, Lieb \cite{LEH} proved the existence of extremal functions by using rearrangement argument,
    and then obtained the best constant of \eqref{hls} in the case of $r=s={2n}/(2n-\lambda)$.
    Now, the extremal function is
    \begin{equation}\label{Seq}
    u(x)=\left(\frac{a}{d+|x-\tilde{x}|^2}\right)^{\lambda/2},
    \end{equation}
    where $a, d>0$ and $\tilde{x}\in \mathbb{R}^n$.
    Clearly, the Euler-Lagrange equation satisfied by the extremal functions is the conformal equation
    \begin{equation}\label{eq}
        u(x)=\int_{\mathbb{R}^n}\frac{u^p(y)}{|x-y|^{\lambda}}dy, \quad u>0\quad in \quad \mathbb{R}^n.
    \end{equation}
    Here $p=(2n-\lambda)/\lambda$.
    A question posed by Lieb is whether the solutions of \eqref{eq} are unique.
    Chen, Li and Ou \cite{CLO} and Li \cite{LYY} gave independently the positive answer
    and classified the $L_{loc}^{2n/\lambda}(\mathbb{R}^n)$-solutions of \eqref{eq} as the form of \eqref{Seq}.
    In their work, the method of moving planes and the method of moving spheres come into plays.

    When $r \neq s$, the Euler-Lagrange system is
    \begin{equation}\label{el}
        \begin{cases}
            u(x)=\displaystyle\int_{\mathbb{R}^n}\frac{v^q(y)}{|x-y|^{\lambda}}dy,
            &\text{ $ u>0 \quad in \quad \mathbb{R}^n, $ } \\
            v(x)=\displaystyle\int_{\mathbb{R}^n}\frac{u^p(y)}{|x-y|^{\lambda}}dy,
            &\text{ $ v>0 \quad in \quad \mathbb{R}^n $ }
        \end{cases}
    \end{equation}
    Here $p=1/(r-1)$ and $q=1/(s-1)$ satisfy the critical condition of Sobolev type
    \begin{equation}\label{cc}
        \frac{1}{p+1}+\frac{1}{q+1}=\frac{\lambda}{n}.
    \end{equation}
    Chen, Li and Ou \cite{CLO5} applied the method of moving planes in integral form to prove that
    the finite energy solutions are radially symmetric and decreasing about some point in $\mathbb{R}^n$.

    In 1958, Stein and Weiss proved the weighted Hardy-Littlewood-Sobolev inequality
    \begin{equation}\label{888}
    \left|\int_{\mathbb{R}^n}\int_{\mathbb{R}^n}
    \frac{f(x)g(y)}{|x|^{\alpha}|x-y|^{\lambda}|y|^{\beta}}dxdy\right|
    \leq C_{n,\alpha,\beta,p,q'}\|f\|_{L^{q'}(\mathbb{R}^n)}\|g\|_{L^p(\mathbb{R}^n)},
    \end{equation}
    where $(f,g)\in L^{r}(\mathbb{R}^n)\times L^s(\mathbb{R}^n)$,
    and $p$, $q'$, $\alpha$, $\beta$ and $\lambda$ satisfy the following conditions:
    $$\begin{cases}
    0<\lambda<n, \ \alpha+\beta \geq 0, \ \alpha<n/q, \ \beta<n/p',\\
    1/q'+1/p \leq 1, \ \ 1/q'+1/p+(\alpha+\beta+\lambda)/n=2.
    \end{cases}
    $$
    Here $p'$ and $q$ are the H\"older conjugates of $p$ and $q'$ respectively.
    In the case of $q'=p$ or $q'+p=2$, Lieb \cite{LEH} obtained the existence of extremal functions.
    Beckner \cite{BW1} gave a determined upper bound of the best constant when $p=q$, $1<p, q'<\infty$, $\alpha=0$,
    $\lambda=n-\alpha/q'$ and $\beta=\alpha/q'$.

    The Euler-Lagrange system satisfied by extremal functions of \eqref{888} is
    \begin{equation}\label{jj3}
        \left \{
        \begin{array}{lll}
            u(x)=\displaystyle\int_{\mathbb{R}^n}\frac{v^{p_2}(y)dy}
            {|x|^{\alpha}|x-y|^{\lambda }|y|^{\beta}},   \\
            v(x)=\displaystyle\int_{\mathbb{R}^n}\frac{u^{p_1}(y)dy}
            {|x|^{\beta}|x-y|^{\lambda }|y|^{\alpha}}.
        \end{array}
        \right.
    \end{equation}
    In 2006, Jin and Li \cite{JL} used the method of moving planes in integral form
    to prove that the finite energy solutions of \eqref{jj3} are radially symmetric
    and decreasing about some point. Moreover, Chen and Li
    obtained in \cite{CL} the best constant of \eqref{888} in the case of $p_1=p_2$
    and $\alpha=\beta$. In 2019, Chen et al. \cite{CLT} found extremal functions of \eqref{888}
    with full weights and with horizontal weights in the Heisenberg group.
    In addition, \cite{JL2} and \cite{LLM} studied the integrability and asymptotic behavior
    of the finite energy solutions of \eqref{jj3}.

    In 2015, Dou and Zhu proved in \cite{DZ} the following reversed Hardy-Littlewood-Sobolev
    inequality (see also \cite{BW3,NN})
    $$
    \left|\int_{\mathbb{R}^n}\int_{\mathbb{R}^n}\frac{f(x)g(y)}{|x-y|^{\lambda}}dxdx\right|
    \geq C\|f\|_{L^r(\mathbb{R}^n)}\|g\|_{L^s(\mathbb{R}^n)},
    \quad \forall (f,g)\in L^r(\mathbb{R}^n)\times L^s(\mathbb{R}^n),
    $$
    where
    $$
    n\geq 1, \ \lambda<0, \ 0<r,s<1, \ 1/r+1/s+\lambda/n=2.
    $$
    In addition, they proved the existence of extremal functions and classified
    the extremal functions by the method of moving spheres when $f \equiv g$ and $r=s$.
    The Euler-Lagrange system satisfied by those extremal functions is
    \begin{equation}\label{j2}
        \left \{
        \begin{array}{lll}
            u(x)=\displaystyle\int_{\mathbb{R}^n}|x-y|^{\Lambda }v^{-p_2}(y)dy,  \\
            v(x)=\displaystyle\int_{\mathbb{R}^n}|x-y|^{\Lambda }u^{-p_1}(y)dy.
        \end{array}
        \right.
    \end{equation}
    Here $\Lambda=-\lambda>0$, $p_1=1/(1-r)$ and $p_2=1/(1-s)$.
    Paper \cite{L} shows the asymptotic estimates and the existence results of solutions of \eqref{j2}.
    But the author did not obtain the radial symmetry of those solutions.
    Thanks to the work of Dou, Guo and Zhu (cf. \cite{DGZ}).
    They developed the method of moving planes in integral form,
    and then Liu \cite{LZ} used their scheme
    to obtain the radial symmetry and monotonicity of solutions of \eqref{j2}.

    When $u\equiv v$, and $p_1\equiv p_2$, \eqref{j2} is reduced to
    \begin{equation}\label{j7}
        u(x)=\int_{\mathbb{R}^n}|x-y|^{\Lambda }u^{-p_1}(y)dy.
    \end{equation}
    In 2004, Li \cite{LYY} used the method of moving spheres to classify the solutions as the form of
    $$
    u(x)=\left(\frac{d+|x-\tilde{x}|^2}{a}\right)^{\Lambda/2}
    $$
    when $p_1=1+2n/\Lambda$. In addition, he posed an question: whether has
    \eqref{j7} no positive (regular) solutions for all $n\geq 1$, $\Lambda>0$ and $p_1>1+2n/\Lambda$?
    In 2007, Xu confirmed this conclusion by the Pohozaev identity in integral form (cf. \cite{XXW}).
    Other related results on PDE with negative exponents can be found in \cite{HW,MW} and references therein.

    In 2018, Chen et al. \cite{CLLT} proved that the following reversed Stein-Weiss inequality
    \begin{equation}\label{j6}
        \left|\int_{\mathbb{R}^n}\int_{\mathbb{R}^n}|x|^{\alpha}|x-y|^{\Lambda}f(x)g(y)|y|^{\beta}dxdy\right|
        \geq C_{n,\alpha,\beta,p,q'}\|f\|_{L^{q'}(\mathbb{R}^n)}\|g\|_{L^p(\mathbb{R}^n)}
    \end{equation}
    holds for any non-negative functions $f \in L^{q'}(\mathbb{R}^n)$ and $g\in L^p(\mathbb{R}^n)$,
    where
    $$
    n\geq 1, \ p \in (0,1), \ q'\in (0,1), \ \Lambda>0, \ 0\leq \alpha<-{n}/{q}, \ 0\leq \beta<-{n}/{p'},
    $$
    and
        \begin{equation}\label{j5}
            \frac{1}{p}+\frac{1}{q'}-\frac{\alpha+\beta+\Lambda}{n}=2.
        \end{equation}
    They also verified that the best constant can be achieved by minimizing the functional
    $$
    J(f,g)=\int_{\mathbb{R}^n}\int_{\mathbb{R}^n}|x|^{\alpha}|x-y|^{\lambda}f(x)g(y)|y|^{\beta}dxdy
    $$
    under the condition $\|f\|_{L^{q'}(\mathbb{R}^n)}=\|g\|_{L^q(\mathbb{R}^n)}=1$.
    Set $u=c_1f^{q'-1}$, $v=c_2g^{p-1}$, $1/(q'-1)=-p_1$ and $1/(p-1)=-p_2$.
    By choosing proper $c_1$ and $c_2$, one can obtain the Euler-Lagrange system
    \begin{equation}\label{j3}
        \left \{
        \begin{array}{lll}
            u(x)=\displaystyle\int_{\mathbb{R}^n}|x|^{\alpha}|x-y|^{\Lambda }v^{-p_2}(y)|y|^{\beta}dy,   \\
            v(x)=\displaystyle\int_{\mathbb{R}^n}|x|^{\beta}|x-y|^{\Lambda }u^{-p_1}(y)|y|^{\alpha}dy.
        \end{array}
        \right.
    \end{equation}
    Here \eqref{j5} becomes
    $$
    \frac{1}{p_1-1}+\frac{1}{p_2-1}=\frac{\alpha +\beta +\Lambda}{n}.
    $$

    Motivated by the work above, we will prove the radial symmetry of positive solutions of \eqref{j3} in this paper.
    We call $(u,v)$ a pair of positive solution of \eqref{j3},
    if $(u,v)$ solves \eqref{j3} a.e. in $\mathbb{R}^n$ and belongs to
    $$
    \{(u, v) \in L_{\rm loc}^\infty(\mathbb{R}^n) \times L_{\rm loc}^\infty(\mathbb{R}^n);
    u\geq 0,\; v\geq 0 \;\; in \; \mathbb{R}^n, \; and \;  u>0, \ v>0 \; when \; x \neq 0\}.
    $$

    The main result in this paper is the following theorem.

    \begin{theorem}\label{Rth3}
        Let $(u,v)$ be a pair of positive solutions of \eqref{j3},
        where
        \begin{equation}\label{j44}
        n \geq 1, \ \Lambda>0, \ \min\{p_1, p_2\}>1, \ 0\leq \alpha<{n}/(p_1-1),
        \ 0\leq \beta<{n}/(p_2-1),
        \end{equation}
        and
        \begin{equation}\label{j4}
            \frac{1}{p_1-1}+\frac{1}{p_2-1}\leq \frac{\alpha +\beta +\Lambda}{n}.
        \end{equation}
        Then $u$ and $v$ are radially symmetric and increasing about the origin.
    \end{theorem}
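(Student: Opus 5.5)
We use the method of moving planes in integral form, in the version of Dou--Guo--Zhu \cite{DGZ} as adapted by Liu \cite{LZ} to \eqref{j2}. The kernel $|x-y|^{\Lambda}$ grows with distance, so the comparison runs opposite to the Hardy--Littlewood--Sobolev case: we want to show that $u,v$ are \emph{smaller} on the side containing the origin. The weights $|x|^{\alpha}$ and $|y|^{\beta}$ are not translation invariant. Because of this we only move planes toward the origin, and this is why the conclusion is symmetry about $0$ rather than about an arbitrary point.

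\textbf{Setup and asymptotics.} Fix a direction, say $x_1$. For $\mu<0$ let $T_\mu=\{x_1=\mu\}$, let $\Sigma_\mu=\{x_1<\mu\}$, and let $x^\mu=(2\mu-x_1,x_2,\dots,x_n)$ be the reflection of $x$ in $T_\mu$. Put $u_\mu(x)=u(x^\mu)$ and $v_\mu(x)=v(x^\mu)$. Since $u,v\in L^\infty_{loc}$, the integrals converge, and dominated convergence gives
\[
\lim_{|x|\to\infty}\frac{u(x)}{|x|^{\alpha+\Lambda}}=\int_{\mathbb{R}^n}|y|^{\beta}v^{-p_2}(y)\,dy=:a_0\in(0,\infty),
\]
and similarly $v(x)/|x|^{\beta+\Lambda}\to b_0:=\int|y|^{\alpha}u^{-p_1}(y)\,dy$. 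Finiteness of $a_0,b_0$ needs a separate argument. We combine a lower bound $u(x)\ge c|x|^{\alpha}(1+|x|)^{\Lambda}$ with the restrictions $\alpha<n/(p_1-1)$ and $\beta<n/(p_2-1)$. This uses that $v^{-p_2}|y|^\beta$ is integrable at infinity whenever $p_2(\beta+\Lambda)-\beta>n$, and integrability of $|y|^{\beta-p_2\beta}$ near $0$ is exactly $\beta<n/(p_2-1)$. Condition \eqref{j4} is what makes these growth exponents consistent. We expect it to enter as in \cite{LZ}, where the analogue of \eqref{j4} prevents degenerate behaviour; the equality case $=$ is the extremal one.

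\textbf{Comparison identity.} Split $\mathbb{R}^n=\Sigma_\mu\cup\Sigma_\mu^c$ and reflect the second half. For $x\in\Sigma_\mu$ this gives
\[
u(x)-u_\mu(x)=\int_{\Sigma_\mu}\Big(|x-y|^{\Lambda}-|x^\mu-y|^{\Lambda}\Big)\Big(|x|^{\alpha}|y|^{\beta}v^{-p_2}(y)-|x^\mu|^{\alpha}|y^\mu|^{\beta}v_\mu^{-p_2}(y)\Big)dy ,
\]
after writing the difference suitably. Here $|x-y|<|x^\mu-y|$ for $x,y\in\Sigma_\mu$. Since $\mu<0$, we have $|x|>|x^\mu|$ exactly when $x_1<\mu$ is farther from $0$. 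Hence on $\Sigma_\mu$ we get $|x|\ge|x^\mu|$ and $|y|\ge|y^\mu|$, so the weights always act in the favorable direction. This is the only place the sign of $\mu$ matters. Define the bad sets $\Sigma_\mu^u=\{x\in\Sigma_\mu: u(x)<u_\mu(x)\}$ and $\Sigma_\mu^v$ in the same way. We aim to show that both are empty: $u\ge u_\mu$ on $\Sigma_\mu$ for all $\mu<0$.

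\textbf{Moving the plane.} \emph{Step 1 (start).} For $\mu$ very negative, the asymptotics give $u\ge u_\mu$ and $v\ge v_\mu$ on $\Sigma_\mu$. For $|x|$ large we use the expansion $u(x)=a_0|x|^{\alpha+\Lambda}+\dots$. The DGZ approach instead works directly with the integral: on the bad set we estimate $u_\mu-u$ by the integral of $v^{-p_2}-v_\mu^{-p_2}$ over $\Sigma^v_\mu$, and then use a pointwise bound rather than HLS norms, since the exponent is reversed. \emph{Step 2 (continue).} Let $\mu_0=\sup\{\mu\le 0: u\ge u_\nu,\ v\ge v_\nu \text{ in }\Sigma_\nu\ \forall\nu\le\mu\}$. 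Suppose $\mu_0<0$. The strict inequalities in the weights then give $u>u_{\mu_0}$ strictly, and a continuity and compactness argument pushes the plane slightly further, a contradiction. Hence $\mu_0=0$, which yields $u(-x_1,x')\ge u(x_1,x')$ for $x_1<0$. Running the same argument from the other side gives equality. Since the direction was arbitrary, $u$ and $v$ are radial, and they are increasing in $|x|$ by the monotonicity obtained along the way.

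The main obstacle is Step 1 and the ``push slightly further'' step. The negative exponents $-p_i$ preclude the standard HLS-based smallness estimate on the bad sets. We would follow \cite{DGZ,LZ}: show that the bad sets must be bounded and stay away from $T_\mu$ and from infinity, using the leading growth $|x|^{\alpha+\Lambda}$ and the $L^\infty_{loc}$ bounds, and then derive a contradiction at a minimum point of $u-u_\mu$ via the integral identity.
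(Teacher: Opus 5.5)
Your overall architecture (moving $T_\mu$ only from $x_1=-\infty$ toward the origin, using $|x|>|x^\mu|$ on $\Sigma_\mu$ for $\mu<0$ to rule out stopping at some $\mu_0<0$, then reversing direction) matches the paper's. However, comparing $u$ with $u_\mu$ and $v$ with $v_\mu$ does not close, because the claim that ``the weights always act in the favorable direction'' is false.

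First, your displayed identity is wrong when $\alpha>0$. Reflecting correctly gives
\[
|x|^{-\alpha}u(x)-|x^\mu|^{-\alpha}u_\mu(x)=\int_{\Sigma_\mu}\big(|x^\mu-y|^{\Lambda}-|x-y|^{\Lambda}\big)\big(|y^\mu|^{\beta}v_\mu^{-p_2}(y)-|y|^{\beta}v^{-p_2}(y)\big)\,dy,
\]
so the outer weight must be divided out, not put inside the second factor. Only the outer weight is favorable. For the inner one, $y\in\Sigma_\mu$ gives $|y|>|y^\mu|$. So from $v\ge v_\mu$ you get $v_\mu^{-p_2}\ge v^{-p_2}$ but also $|y^\mu|^{\beta}\le|y|^{\beta}$, and the sign of $|y^\mu|^{\beta}v_\mu^{-p_2}(y)-|y|^{\beta}v^{-p_2}(y)$ is undetermined. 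The same happens with $|y|^{\alpha}u^{-p_1}$ in the $v$-equation. (Even in your own formula, $|x|^\alpha|y|^\beta\ge|x^\mu|^\alpha|y^\mu|^\beta$ pushes the wrong way.) Hence $u\ge u_\mu$ and $v\ge v_\mu$ do not reproduce themselves through \eqref{j3}. Neither the start, nor the strictness at $\mu_0$, nor the continuation can be carried out with these quantities.

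The missing idea is the paper's normalization $t=|x|^{-\alpha}u$, $w=|x|^{-\alpha/p_1}u$, $s=|x|^{-\beta}v$. Then $|y|^{\beta}v^{-p_2}=|y|^{(1-p_2)\beta}s^{-p_2}$ carries a nonpositive power (favorable since $p_2>1$), and $|y|^{\alpha}u^{-p_1}=w^{-p_1}$ carries none. This gives the chain $s\ge s_\mu\Rightarrow t\ge t_\mu\Rightarrow w\ge w_\mu\Rightarrow s\ge s_\mu$, where the middle step uses $w=|x|^{\alpha(1-1/p_1)}t$ and $|x|\ge|x^\mu|$. These stronger inequalities imply $u\ge u_\mu$ and $v\ge v_\mu$. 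Strictness at $\mu_0<0$ then comes from $|y^{\mu_0}|^{(1-p_2)\beta}>|y|^{(1-p_2)\beta}$ or $|x|^{\alpha(1-1/p_1)}>|x^{\mu_0}|^{\alpha(1-1/p_1)}$. This requires $\alpha+\beta>0$: for $\alpha=\beta=0$ the system is translation invariant and symmetry about the origin fails.

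The price of the normalization is that the integrand is singular at $0$ and at its reflection $0^\mu=(2\mu,0,\dots,0)$. That is why the paper needs the gradient bounds of Lemma~\ref{le4} and a splitting around the reflected origin.

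Your continuation step also lacks the actual mechanism. The paper obtains a lower bound of order $(\mu-x_1)$ for the contribution of a fixed compact region where strict inequality holds (the paper's $J_2$). This bound dominates the tail and near-plane error terms, each of which is shown to be a small multiple of $\mu-x_1$ as $\mu\downarrow\mu_0$. The tail control is where \eqref{j4} enters, via $n<(p_2-1)(\Lambda+\beta)$; your proposal never identifies this use.
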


  Here we use the method of moving planes in integral form introduced by Chen-Li-Ou in \cite{CLO},
  which was also applied in \cite{JL} to prove
  the radial symmetry of positive solutions of the Stein-Weiss-type system \eqref{jj3}.
  Although the Stein-Weiss inequality does not work for \eqref{j3} as in \cite{JL}, we can
  use the scheme posed by Dou-Guo-Zhu in \cite{DGZ} which had been employed successfully
  to deal with the radial symmetry of solutions of \eqref{j2} in \cite{LZ}.
  However, different from \eqref{j2}, \eqref{j3} is a double weighted system.
  It seems difficult to directly obtain the radial symmetry and monotonicity of $u$ and $v$.
  We introduce the auxiliary functions
  \begin{equation}\label{vb6}
  t(x):=|x|^{-\alpha}u(x), \; w(x):=|x|^{-\alpha/p_1}u(x), \;
  s(x):=|x|^{-\beta}v(x),
  \end{equation}
    and demonstrate the radial symmetry and monotonicity of $w$ and $s$ by the argument of moving planes.
    This implies that $u$ and $v$ are also radially symmetric and monotonous about the origin.
Therefore, we have to handle the singularity near the origin when estimating the integrals involving $w$ and $s$.
 Here we use the ideas in \cite{CLO} where the singularity
  near the origin comes from the Kelvin transformation though \eqref{eq} is without weights.

    \section{Preliminaries}

    The following lemma, which was proved in \cite{CLLT},
    indicates asymptotic estimation of the solutions to \eqref{j3}.

    \begin{lemma}\label{le1} (Lemma 14 and Theorem 3 in \cite{CLLT})
        Let $\alpha, \beta ,p_1 ,p_2, \Lambda$ be positive.
        Assume that $(u, v)$ is a pair of positive solutions of \eqref{j3}.
        Then

        (i) we have
        \begin{equation}\label{600}
            \int_{\mathbb{R}^n}(1+|y|^{\Lambda })v^{-p_2}(y)|y|^{\beta}dy<\infty,\quad
            \int_{\mathbb{R}^n}(1+|y|^{\Lambda })u^{-p_1}(y)|y|^{\alpha}dy<\infty.
        \end{equation}

        (ii) There exist some constant $C_1$, $C_2>0$ such that for all $x \in \mathbb{R}^n$,
        \begin{equation}\label{601}
        \left \{
        \begin{array}{lll}
            &{C_1}^{-1}(1+|x|^{\Lambda})\leq \frac{u(x)}{|x|^{\alpha}}\leq C_1(1+|x|^{\Lambda})\\[3mm]
            &{C_2}^{-1}(1+|x|^{\Lambda})\leq \frac{v(x)}{|x|^{\beta}}\leq C_2(1+|x|^{\Lambda}).
        \end{array}
        \right.
        \end{equation}

        (iii) There hold
        \begin{equation}\label{602}
            \begin{cases}
              \displaystyle  0<a:=\lim_{|x|\to \infty}\frac{u(x)}{|x|^{\Lambda+\alpha }}
                =\int_{\mathbb{R}^n}v^{-p_2}(y)|y|^{\beta}dy<\infty,\\
              \displaystyle  0<b:=\lim_{|x|\to \infty}\frac{v(x)}{|x|^{\Lambda+\beta }}
                =\int_{\mathbb{R}^n}u^{-p_1}(y)|y|^{\alpha}dy<\infty.
            \end{cases}
        \end{equation}
    \end{lemma}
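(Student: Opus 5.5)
The plan is to work throughout with the weight-free parts of $u$ and $v$,
$$
t(x)=|x|^{-\alpha}u(x)=\int_{\mathbb{R}^n}|x-y|^{\Lambda}v^{-p_2}(y)|y|^{\beta}dy,\qquad
s(x)=|x|^{-\beta}v(x)=\int_{\mathbb{R}^n}|x-y|^{\Lambda}u^{-p_1}(y)|y|^{\alpha}dy .
$$
First I would derive (i) from the fact that $u,v\in L^\infty_{\rm loc}$. Together with the equations, this forces $t(x_0)<\infty$ and $s(x_0)<\infty$ at some (indeed almost every) point $x_0\neq 0$.

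Since $t$ and $s$ are integrals of positive measurable functions against $|x-y|^\Lambda>0$ a.e., they are strictly positive everywhere. By Fatou's lemma they are lower semicontinuous, so they are bounded below by a positive constant on every compact set. (Do not use continuity here, since it is not yet known.) Next rewrite the integrand of $t$ as
$$
v^{-p_2}(y)|y|^{\beta}=|y|^{-\beta(p_2-1)}s(y)^{-p_2}.
$$
On a ball $B_R$ this is bounded by $C|y|^{-\beta(p_2-1)}$, which is integrable near the origin precisely because $\beta(p_2-1)<n$ (the hypothesis $\beta<n/(p_2-1)$ in \eqref{j44}). Outside $B_R$ with $R\ge 2|x_0|$, we have $|x_0-y|^\Lambda\ge 2^{-\Lambda}|y|^\Lambda\ge c(1+|y|^\Lambda)$. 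Hence
$$
\int_{|y|>R}(1+|y|^\Lambda)v^{-p_2}|y|^\beta dy\le C\,t(x_0)<\infty .
$$
Combining the two regions gives the first half of \eqref{600}. The second half follows symmetrically, using $\alpha(p_1-1)<n$.

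For (ii), the upper bound uses $|x-y|^\Lambda\le C_\Lambda(|x|^\Lambda+|y|^\Lambda)$, which gives
$$
t(x)\le C\Big(|x|^\Lambda\int v^{-p_2}|y|^\beta dy+\int|y|^\Lambda v^{-p_2}|y|^\beta dy\Big)\le C(1+|x|^\Lambda)
$$
by (i). For the lower bound, (i) now lets me apply dominated convergence with majorant $C(1+|y|^\Lambda)v^{-p_2}|y|^\beta$ on bounded sets of $x$. This shows that $t$ is continuous and finite everywhere, so $\min_{\overline{B_2}}t>0$. For $|x|\ge 2$, restrict the integral to $B_1$, where $|x-y|\ge |x|/2$. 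This gives
$$
t(x)\ge 2^{-\Lambda}|x|^\Lambda\int_{B_1}v^{-p_2}|y|^\beta dy\ge c|x|^\Lambda ,
$$
and the integral over $B_1$ is positive because the integrand is positive a.e. Together these give $t\sim 1+|x|^\Lambda$, and the same argument applies to $s$; this is \eqref{601}.

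For (iii), I would write
$$
\frac{u(x)}{|x|^{\Lambda+\alpha}}=\int\Big(\frac{|x-y|}{|x|}\Big)^\Lambda v^{-p_2}(y)|y|^\beta dy .
$$
For $|x|\ge1$ the integrand is dominated by $C(1+|y|^\Lambda)v^{-p_2}|y|^\beta\in L^1$ by (i), and it converges pointwise to $v^{-p_2}|y|^\beta$. Dominated convergence then gives the limit $a=\int v^{-p_2}|y|^\beta dy$, which is finite by (i) and positive by positivity of the integrand; $b$ is handled identically.

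The main obstacle is step (i), in particular the integrability near the origin of the weighted negative power. Controlling it requires a lower bound on $s$ (resp.\ $t$) before any continuity is available, which is why I use Fatou's lemma and lower semicontinuity. It also requires the restriction $\beta<n/(p_2-1)$ and $\alpha<n/(p_1-1)$ on the weights, which I take from the standing assumptions \eqref{j44} of \cite{CLLT}.
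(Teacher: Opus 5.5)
The paper gives no argument for this lemma. It is imported from Lemma 14 and Theorem 3 of \cite{CLLT}, so your proposal is a self-contained replacement for a citation, and it is correct. Parts (ii) and (iii) follow from (i) exactly as you describe: the upper bound from $|x-y|^\Lambda\le C(|x|^\Lambda+|y|^\Lambda)$, continuity of $t,s$ by dominated convergence, the lower bound from the integral over $B_1$ when $|x|\ge 2$, and dominated convergence for $t(x)/|x|^\Lambda$. These are the facts the paper actually uses later, in Lemma \ref{lem3}, Lemma \ref{le4} and Step 1.

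The one point to tidy is (i). The lemma as stated assumes only $\alpha,\beta,p_1,p_2,\Lambda>0$, but your near-origin bound imports $\beta(p_2-1)<n$ and $\alpha(p_1-1)<n$ from \eqref{j44}. This costs the paper nothing, since the lemma is only applied under \eqref{j44}. Still, the extra hypothesis and the lower-semicontinuity detour are unnecessary, and you have misidentified the main obstacle.

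Since $t<\infty$ a.e., choose two points $x_0\neq x_1$ with $t(x_0),t(x_1)<\infty$, and set $d=|x_0-x_1|$. Then $\max\{|x_0-y|,|x_1-y|\}\ge d/2$ for every $y$, and $|x_0-y|\ge |y|/2$ for $|y|\ge 2|x_0|$. Hence $1+|y|^\Lambda\le C(|x_0-y|^\Lambda+|x_1-y|^\Lambda)$ on all of $\mathbb{R}^n$, which gives $\int(1+|y|^\Lambda)v^{-p_2}|y|^\beta\,dy\le C\,(t(x_0)+t(x_1))<\infty$; the same works for $s$. This proves (i) under positivity alone.

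Your (ii) then gives $s\le C$ near $0$, so $|y|^{-\beta(p_2-1)}\le C\,v^{-p_2}(y)|y|^\beta$ there. Combined with (i), this shows that your weight restrictions are consequences of the existence of a solution, not assumptions needed for the estimate. Outside that range the lemma holds vacuously, which your version of (i) cannot detect.
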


    \begin{lemma} \label{lem3}
        Assume that $(u, v)$ is a pair of positive solutions of \eqref{j3} with \eqref{j44} and \eqref{j4}.
        Then both $t(x)$ and $s(x)$ in \eqref{vb6} are continuous in $\mathbb{R}^n$,
        and differentiable in $\mathbb{R}^n \setminus \{0\}$.
        In addition, there hold
        \begin{equation}\label{td}
        \nabla t(x)= \int_{\mathbb{R}^n}(\nabla_x|x-y|^{\Lambda})v^{-p_2}(y)|y|^{\beta}dy,
        \quad as \ \ x \neq 0;
        \end{equation}
        \begin{equation}\label{sd}
        \nabla s(x)= \int_{\mathbb{R}^n}(\nabla_x|x-y|^{\Lambda})u^{-p_1}(y)|y|^{\alpha}dy,
        \quad as \ \ x \neq 0.
        \end{equation}
    \end{lemma}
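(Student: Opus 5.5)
Dividing the first equation of \eqref{j3} by $|x|^{\alpha}$ and the second by $|x|^{\beta}$ gives, for $x\neq 0$,
$$
t(x)=\int_{\mathbb{R}^n}|x-y|^{\Lambda}v^{-p_2}(y)|y|^{\beta}dy,\qquad
s(x)=\int_{\mathbb{R}^n}|x-y|^{\Lambda}u^{-p_1}(y)|y|^{\alpha}dy .
$$
The right-hand sides are defined for every $x\in\mathbb{R}^n$, including $x=0$, so we take these formulas as the definitions of $t$ and $s$ on all of $\mathbb{R}^n$. We then prove continuity and differentiability by dominated convergence, using the finiteness of the weighted moments in \eqref{600} from Lemma \ref{le1}(i). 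The arguments for $t$ and $s$ are symmetric, so we treat only $t$, writing $d\mu(y)=v^{-p_2}(y)|y|^{\beta}dy$. By \eqref{600}, $\mu$ is a finite measure with $\int(1+|y|^{\Lambda})\,d\mu<\infty$.

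\emph{Continuity.} Fix $x_0$ and let $|x-x_0|\le 1$. Then
$$
|x-y|^{\Lambda}\le (|x_0|+1+|y|)^{\Lambda}\le C_{x_0}(1+|y|^{\Lambda}),
$$
and this bound is $\mu$-integrable by \eqref{600}. Since $x\mapsto |x-y|^{\Lambda}$ is continuous for each $y$, dominated convergence gives $t(x)\to t(x_0)$. This also shows that $t$ is finite everywhere.

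\emph{Differentiability and \eqref{td}.} For $y\neq x$ we have $\nabla_x|x-y|^{\Lambda}=\Lambda|x-y|^{\Lambda-2}(x-y)$, so $|\nabla_x|x-y|^{\Lambda}|=\Lambda|x-y|^{\Lambda-1}$. We split according to the size of $\Lambda$.

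\emph{Case $\Lambda\ge 1$.} Here $z\mapsto|z|^{\Lambda}$ is $C^1$ (for $\Lambda=1$ it is Lipschitz and differentiable off the origin). By the mean value theorem, the difference quotients of $|x-y|^{\Lambda}$ in $x$ near $x_0$ are bounded by $\Lambda\sup_{|\xi-x_0|\le1}|\xi-y|^{\Lambda-1}\le C(1+|y|^{\Lambda-1})\le C(1+|y|^{\Lambda})$, which is $\mu$-integrable. Dominated convergence then yields \eqref{td} at every $x$; the only exception is $\Lambda=1$, where the pointwise limit fails at $y=x$, and this single point has $\mu$-measure zero.

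\emph{Case $0<\Lambda<1$ (main obstacle).} The gradient kernel $|x-y|^{\Lambda-1}$ is singular at $y=x$, so a uniform domination needs local information on the density of $\mu$. This is where $x_0\neq 0$ is used. On a ball $B=B_{|x_0|/2}(x_0)$, which stays away from the origin, Lemma \ref{le1}(ii) (or simply $v\in L^\infty_{loc}$ together with the lower bound $v(x)\ge C_2^{-1}|x|^{\beta}$) shows that $v^{-p_2}|y|^{\beta}$ is bounded. We then split $t=t_1+t_2$ with a smooth cutoff, $t_1$ being the integral over $B$ and $t_2$ the integral over its complement.

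For $t_2$, the kernel is smooth in $x$ for $x$ near $x_0$, and the gradient is dominated by $C(1+|y|^{\Lambda})$. Dominated convergence applies exactly as in the first case.

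For $t_1$, we have the convolution of the bounded, compactly supported density $h=\chi_B v^{-p_2}|y|^{\beta}$ with $|z|^{\Lambda}$. The kernel $|z|^{\Lambda-1}$ is locally integrable because $\Lambda-1>-n$ for every $n\ge1$. Hence $t_1=|\cdot|^{\Lambda}*h$ is $C^1$ with $\nabla t_1=(\nabla|\cdot|^{\Lambda})*h$. To justify differentiating under the integral, we write
$$
|x+he-y|^{\Lambda}-|x-y|^{\Lambda}=\int_0^1 \nabla|\cdot|^{\Lambda}(x+\tau he-y)\cdot he\,d\tau,
$$
apply Fubini, and use the continuity of $x\mapsto\int|x-y|^{\Lambda-1}h(y)\,dy$. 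That continuity follows from the uniform integrability of $|z|^{\Lambda-1}$ near $0$ together with the boundedness of $h$.

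Adding the two pieces gives \eqref{td} at every $x_0\neq0$, and the same argument with $\mu$ replaced by $u^{-p_1}|y|^{\alpha}dy$ gives \eqref{sd}. The conditions \eqref{j44} and \eqref{j4} are used only through Lemma \ref{le1}.
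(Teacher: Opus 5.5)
Your argument is correct and follows the same strategy as the paper. For the nonsingular part you use dominated convergence based on the moment bound \eqref{600}. For $0<\Lambda<1$ you isolate the singularity of $|x-y|^{\Lambda-1}$ in a region away from the origin, where $v^{-p_2}|y|^{\beta}$ is bounded by \eqref{601}, and local integrability of $|z|^{\Lambda-1}$ then finishes the job.

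The differences are only in execution. You use the single global bound $C(1+|y|^{\Lambda})$ instead of the paper's splitting by $|y|$, which also gives continuity at every point directly. You handle the local singular piece by the fundamental theorem of calculus, Fubini, and continuity of the vector convolution $(\nabla|\cdot|^{\Lambda})*h$, rather than by the Gilbarg--Trudinger cutoff approximation $T_3^{\varepsilon}$. Note that it is this vector kernel, not $|z|^{\Lambda-1}$, whose convolution must be shown continuous; your argument applies to it unchanged.
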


    \begin{proof}
        We here only prove the conclusions about $t$. The proof of the conclusions about $s$ is analogous.

        First we claim that $t$ is continuous at the origin. In fact, when $x \in B_1(0)$,
        by \eqref{600} there holds
        $$
        |x-y|^\Lambda v^{-p_2}(y)|y|^\beta
        \leq (1+|y|^\Lambda) v^{-p_2}(y)|y|^\beta \in L^1(\mathbb{R}^n).
        $$
        Thus, by the dominated convergence theorem, we have the following continuity result
        $$
        \lim_{x \to 0}t(x)=\int_{\mathbb{R}^n} v^{-p_2}(y)|y|^{\Lambda+\beta}dy=t(0).
        $$

        Next, we prove $t$ is differentiable at $x \in \mathbb{R}^n \setminus \{0\}$.

    \textit{Case 1.} When $\Lambda \geq 1$, we can see that the defects of the improper integral
    $$
    F(x):=\int_{\mathbb{R}^n}[\nabla_x |x-y|^{\Lambda}]v^{-p_2}(y)|y|^{\beta}dy
    $$
    may happen at $0$ or $\infty$. Thus, we write
    \begin{equation*}
    \begin{aligned}
    t(x)=&\int_{\mathbb{R}^n\setminus B_R(0)} |x-y|^{\Lambda} v^{-p_2}(y)|y|^{\beta}dy\\
    &+\int_{B_R(0) \setminus B_\delta(0)} |x-y|^{\Lambda} v^{-p_2}(y)|y|^{\beta}dy
    +\int_{B_\delta(0)} |x-y|^{\Lambda} v^{-p_2}(y)|y|^{\beta}dy\\
    :=&t_1(x)+t_2(x)+t_3(x).
    \end{aligned}
    \end{equation*}
    Here $R>2$ and $\delta \in (0,1/2)$ are constants.
    	
		When $y\in \mathbb{R}^n\setminus B_R(0)$, by \eqref{601} we have
		\begin{equation*}
			|x-y|^{\Lambda-1}v^{-p_2}(y)|y|^{\beta}
			\leq C |y|^{(1-p_2)(\Lambda+\beta)-1} \in L^1(\mathbb{R}^n\setminus B_R(0)).			
		\end{equation*}
    Here the integrability is implied by
    \begin{equation}\label{vb7}
    n<(p_2-1)(\Lambda+\beta),
    \end{equation}
    which is deduced from \eqref{j4} and $\alpha<n/(p_1-1)$.
    Therefore, according to the differentiability theorem of integrals with parameter variables
    (cf. Theorem 3.16 in \cite{BJ}), $t_1$ is differentiable at $x$ and
    $$
    \nabla t_1(x)= \int_{\mathbb{R}^n \setminus B_R(0)}
    [\nabla_x |x-y|^{\Lambda}] v^{-p_2}(y)|y|^{\beta}dy.
    $$		
    When $y\in B_\delta(0)$, in view of \eqref{602}, we have
		\begin{equation*}
			|x-y|^{\Lambda-1}v^{-p_2}(y)|y|^{\beta}
			\leq C v^{-p_2}(y)|y|^{\beta} \in L^1(B_\delta(0)).			
		\end{equation*}
		Theorem 3.16 in \cite{BJ} still shows that $t_3$ is differentiable at $x$ and
    $$
    \nabla t_3(x)= \int_{\mathbb{R}^n \setminus B_R(0)}
    [\nabla_x |x-y|^{\Lambda}] v^{-p_2}(y)|y|^{\beta}dy.
    $$		

	Clearly, $t_2$ is differentiable at $x$ and
    $$
    \nabla t_2(x)= \int_{B_R(0) \setminus B_\delta(0)}
    [\nabla_x |x-y|^{\Lambda}] v^{-p_2}(y)|y|^{\beta}dy.
    $$		
    Combining the argument about $t_1,t_2$ and $t_3$, we know that
    $t(x)$ is differentiable at $x$ and $\nabla t(x)=F(x)$.
		
		\textit{Case 2.}  When $0<\Lambda<1$, the defects of
        $F(x)$ may happen at $0$, $\infty$ or $x$.
		
		In view of $x \neq 0$, we can find $R>5$ and $\delta \in (0,1/2)$ such that $x \in B_{R/2}(0)
        \setminus B_{2\delta}(0)$. Thus, by \eqref{602},
		\begin{equation*}
			|x-y|^{\Lambda-1}v^{-p_2}(y)|y|^{\beta}
			\leq Cv^{-p_2}(y)|y|^{\beta} \in
        L^1([\mathbb{R}^n \setminus B_R(0)] \cup B_{\delta}(0)).
		\end{equation*}			
		Therefore, Theorem 3.16 in \cite{BJ} shows that both
		$$
		T_1(x):=\int_{\mathbb{R}^n\setminus B_R(0)}|x-y|^{\Lambda}v^{-p_2}(y)|y|^{\beta}dy
		$$
		and
		$$
		T_2(x):=\int_{B_{\delta}(0)}|x-y|^{\Lambda}v^{-p_2}(y)|y|^{\beta}dy
		$$
		are differentiable at $x$, and
		\begin{equation}\label{702}
			\nabla T_1(x)=\int_{\mathbb{R}^n \setminus B_R(0)}\nabla_x |x-y|^{\Lambda}v^{-p_2}(y)|y|^{\beta}dy,
		\end{equation}
		\begin{equation}\label{704}
			\nabla T_2(x)=\int_{B_\delta(0)}\nabla_x |x-y|^{\Lambda}v^{-p_2}(y)|y|^{\beta}dy.
		\end{equation}
		
		Next, we claim that
		$$
		T_3(x):=\int_{B_R(0) \setminus B_\delta(0)}|x-y|^{\Lambda}v^{-p_2}(y)|y|^{\beta}dy
		$$
		is differentiable at $x$. Here the ideas in \S 4.2 of \cite{GT} are applied.		
		Set
		$$
		\omega(x):=\int_{B_R(0) \setminus B_\delta(0)}[\nabla_x|x-y|^{\Lambda}]v^{-p_2}(y)|y|^{\beta}dy.
		$$
		Take smooth function $\zeta(x) \in C^1(\mathbb{R})$ satisfying
		$$
		\begin{cases}
			\zeta (x)=0,&\text{ $ for \left | x \right | \le 1; $ } \\
			\zeta (x) \in [0,1],&\text{ $ for \left | x \right | \in [1,2]; $ } \\
			\zeta (x)=1,&\text{ $ for \left | x \right | \ge 2, $ }
		\end{cases}
		$$
		and	$0\leq \zeta'\leq 2$. Then we have the cutting-off function
		$\zeta _{\varepsilon}(x)=\zeta ({x}/{\varepsilon}),
		$
		where $\varepsilon>0$ is sufficiently small.
		
		Define
		$$
		T_3^{\varepsilon}(x):=\int_{B_R(0)\setminus B_\delta(0)}|x-y|^{\Lambda}
        \zeta_{\varepsilon}(|x-y|)v^{-p_2}(y)|y|^{\beta}dy.
		$$		
		Clearly, after cut off the singularity, $T_3^{\varepsilon}(x)$ is differentiable at $x$, and
		$$
		\nabla T_3^{\varepsilon}(x)
		=\int_{B_R(0)\setminus B_\delta(0)}\nabla_x[|x-y|^{\Lambda}
        \zeta_{\varepsilon}(|x-y|)]v^{-p_2}(y)|y|^{\beta}dy.
		$$
		Thus, it follows
		$$
		T_3(x)-T_3^{\varepsilon}(x)
		=\int_{|x-y|\leq 2\varepsilon}
		(1-\zeta_{\varepsilon})|x-y|^{\Lambda}v^{-p_2}(y)|y|^{\beta}dy,
		$$
		and
		$$
		\omega(x)-\nabla T_3^{\varepsilon}(x)
		=\int_{|x-y|\leq 2\varepsilon}\nabla_x[(1-\zeta_{\varepsilon})|x-y|^{\Lambda}]
		v^{-p_2}(y)|y|^{\beta}dy.
		$$
		Noting
		$$
		|T_3(x)-T_3^{\varepsilon}(x)|
		\leq C\varepsilon^\Lambda \int_{|x-y|\leq 2\varepsilon}v^{-p_2}(y)|y|^{\beta}dy
		$$
		and
		$$
		\begin{aligned}
			|\omega(x)-\nabla T_3^{\varepsilon}(x)|
			\leq & \int_{|x-y|\leq 2\varepsilon}
			(C \varepsilon^{-1}|x-y|^{\Lambda}+\Lambda|x-y|^{\Lambda-1})v^{-p_2}(y)|y|^{\beta}dy\\
			\leq &C\varepsilon^{n+\Lambda-1}\sup_{B_R(0)\setminus B_\delta(0)}[{v^{-p_2}(x)|x|^{\beta}}],
		\end{aligned}
		$$
		we see that $T_3^{\varepsilon}$ and $\nabla T_3^{\varepsilon}$
        converge uniformly to $T_3$ and $\omega$ respectively when $\varepsilon\to 0$.
        Hence, the claim holds and
		\begin{equation}\label{703}
			\nabla T_3(x)=\int_{B_R(0)\setminus B_\delta(0)}[\nabla_x|x-y|^{\Lambda}]v^{-p_2}(y)|y|^{\beta}dy.
		\end{equation}
        Thus, $t=T_1+T_2+T_3$ is differentiable at $x$ when $0<\Lambda<1$.
        In addition, \eqref{702}-\eqref{703} imply \eqref{td}.
		
		Therefore, Cases 1 and 2 show that $t$ is differentiable in $\mathbb{R}^n \setminus \{0\}$ and
        \eqref{td} holds.

        Similar to the argument on $t$,
        we also derive that $s$ is continuous in $\mathbb{R}^n$ and differentiable
        in $\mathbb{R}^n \setminus \{0\}$, and \eqref{sd} holds.
    \end{proof}

    \begin{lemma}\label{le4}
    	Assume that the assumptions of Lemma \ref{lem3} are true.
    	Then
    \begin{equation}\label{s}
    		|\nabla t(x)|, |\nabla s(x)|\leq
    		\begin{cases}
    			C(1+|x|^{\Lambda-1}),
             & when \ x\in B_{h_0}(0)\setminus \{0\},\\[3mm]
                C(h_0,R_0), & when \ x\in B_{R_0}(0)\setminus B_{h_0}(0),\\[3mm]
    			C|x|^{\Lambda-1},
    & when \ x \in \mathbb{R}^n\setminus B_{R_0}(0).
    		\end{cases}
    	\end{equation}
    	Here $0 <h_0<1/3$, $R_0>2$, and $C>0$ is a constant.
    \end{lemma}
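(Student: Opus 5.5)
The plan is to start from the representation \eqref{td} (and \eqref{sd}) of Lemma \ref{lem3} and the pointwise bound $|\nabla_x|x-y|^{\Lambda}|=\Lambda|x-y|^{\Lambda-1}$, which give
$$
|\nabla t(x)|\leq \Lambda\int_{\mathbb{R}^n}|x-y|^{\Lambda-1}v^{-p_2}(y)|y|^{\beta}dy .
$$
We only treat $t$; the argument for $s$ is the same with $(u,p_1,\alpha)$ and $(v,p_2,\beta)$ interchanged. Throughout, we use two facts. First, by \eqref{601}, $v^{-p_2}(y)|y|^\beta\leq C|y|^{\beta(1-p_2)}(1+|y|^\Lambda)^{-p_2}$. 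Second, by \eqref{600}, $\int_{\mathbb{R}^n}(1+|y|^\Lambda)v^{-p_2}|y|^\beta dy<\infty$, together with the integrability \eqref{vb7} at infinity.

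\textbf{Case $\Lambda\geq 1$.} Here $|x-y|^{\Lambda-1}\leq C(|x|^{\Lambda-1}+|y|^{\Lambda-1})\leq C(|x|^{\Lambda-1}+1+|y|^{\Lambda})$. This gives $|\nabla t(x)|\leq C(1+|x|^{\Lambda-1})$ for every $x\neq0$, using only \eqref{600}. This already yields the first two lines of \eqref{s}. For the third line, when $|x|\geq R_0>2$ we have $1\leq |x|^{\Lambda-1}$, so the same bound gives $C|x|^{\Lambda-1}$.

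\textbf{Case $0<\Lambda<1$.} Now the kernel $|x-y|^{\Lambda-1}$ is singular at $y=x$ but bounded by $1$ when $|x-y|\geq1$. We split the integral into $\{|x-y|\geq |x|/2\}$ and $\{|x-y|<|x|/2\}$ (for $|x|$ large), or into $\{|x-y|\geq 1\}$ and $\{|x-y|<1\}$ (for $|x|$ bounded).

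For $x\in B_{h_0}(0)\setminus\{0\}$, the region $|x-y|\geq1$ contributes at most $\int v^{-p_2}|y|^\beta<\infty$. On $|x-y|<1$ we have $y\in B_2(0)$, and we use $v^{-p_2}(y)|y|^\beta\leq C|y|^{-\beta(p_2-1)}$. We must then control
$$
\int_{B_2}|x-y|^{\Lambda-1}|y|^{-\beta(p_2-1)}dy .
$$
Note that $\beta(p_2-1)<n$ by \eqref{j44}. If $\beta(p_2-1)+1-\Lambda<n$, this integral is bounded uniformly in $x$. Otherwise, splitting at $|y|\lessgtr|x|/2$ gives a bound $C|x|^{n+\Lambda-1-\beta(p_2-1)}$, and we need this to be $\lesssim 1+|x|^{\Lambda-1}$. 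That holds because $n-\beta(p_2-1)>0$.

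For $x\in B_{R_0}\setminus B_{h_0}$, the same splitting gives a constant depending on $h_0,R_0$, since the weight is bounded near $x$ and integrable elsewhere.

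For $|x|\geq R_0$:
\begin{itemize}
\item On $|x-y|\geq|x|/2$, we get $|x-y|^{\Lambda-1}\leq C|x|^{\Lambda-1}$, times a finite integral.
\item On $|x-y|<|x|/2$, we have $|y|\sim|x|$, so the weight is at most $C|x|^{\beta-p_2(\Lambda+\beta)}$. The kernel integrates to $C|x|^{n+\Lambda-1}$, giving $C|x|^{\Lambda-1}\cdot|x|^{n-(p_2-1)(\Lambda+\beta)}\cdot|x|^{0}$. By \eqref{vb7} this is $\leq C|x|^{\Lambda-1}$.
\end{itemize}

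The main obstacle is the near-origin estimate when $\Lambda<1$. There the singularity of $|x-y|^{\Lambda-1}$ at $y=x$ interacts with the possible blow-up of $v^{-p_2}|y|^\beta\sim|y|^{-\beta(p_2-1)}$ at $0$. Here the restriction $\beta<n/(p_2-1)$ from \eqref{j44} is exactly what is needed to make the resulting power of $|x|$ no worse than $|x|^{\Lambda-1}$.
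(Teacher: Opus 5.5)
Your argument is correct and has the same skeleton as the paper's: the kernel bound $|\nabla t(x)|\le\Lambda\int_{\mathbb{R}^n}|x-y|^{\Lambda-1}v^{-p_2}(y)|y|^{\beta}dy$ from Lemma \ref{lem3}, followed by a splitting of the $y$-integral that uses \eqref{600}, \eqref{601} and \eqref{vb7}. The decompositions differ, though.

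For $\Lambda\ge1$, your inequality $|x-y|^{\Lambda-1}\le C(|x|^{\Lambda-1}+1+|y|^{\Lambda})$ plus the moment bound \eqref{600} gives $C(1+|x|^{\Lambda-1})$ for all $x\neq0$ in one line. The paper uses that argument only for $|x|\ge R_0$. Near the origin it splits into $B_{2|x|}(x)$, $B_1(0)$ and the exterior, which costs extra exponent checks (for $s$: $n+\Lambda-1+(1-p_1)\alpha>0$ and \eqref{bn9}). So here your route is shorter.

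For $0<\Lambda<1$ near the origin it is the reverse. You cut at $|x-y|=1$ and then need a case analysis of $\int_{B_2}|x-y|^{\Lambda-1}|y|^{-\beta(p_2-1)}dy$. The paper cuts at $|x-y|=|x|/2$, bounds $|x-y|^{\Lambda-1}\le C|x|^{\Lambda-1}$ off that ball, and uses $|y|\sim|x|$ on it, which gives $C|x|^{\Lambda-1}$ with no cases. Your version shows a bit more (a bounded gradient near $0$ when $\beta(p_2-1)<n-1+\Lambda$), but this is not needed.

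In fact, the cut at $|x-y|=|x|/2$ that you already use for $|x|\ge R_0$ works for every $x\neq0$. The function $r^{n-(p_2-1)\beta}(1+r^{\Lambda})^{-p_2}$ is bounded on $(0,\infty)$ by \eqref{j44} and \eqref{vb7}. Hence that cut gives $|\nabla t(x)|\le C|x|^{\Lambda-1}$ on $\mathbb{R}^n\setminus\{0\}$ when $\Lambda<1$, which contains all three lines of \eqref{s}.

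A few minor points, none of which affects the conclusion:

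\begin{itemize}
\item In the borderline case $\beta(p_2-1)+1-\Lambda=n$, your integral over $B_2$ grows like $\log(e/|x|)$, not like $|x|^{0}$. It is still at most $C|x|^{\Lambda-1}$.
\item When $\beta(p_2-1)+1-\Lambda>n$, the piece $|y|\ge|x|/2$ needs a further cut at $|x-y|=|x|/2$, or the scaling $y=|x|z$. Simply bounding $|y|^{-\beta(p_2-1)}$ by $C|x|^{-\beta(p_2-1)}$ there is too crude.
\item For $|x|\ge R_0$, the near-diagonal piece is really $C|x|^{\Lambda-1}|x|^{n-(p_2-1)(\Lambda+\beta)}|x|^{-\Lambda}$. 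The extra factor $|x|^{-\Lambda}$ only helps.
\end{itemize}
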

    \begin{proof}
    We only prove that $|\nabla s|$ is controlled by the right hand side of \eqref{s},
    and the proof on $|\nabla t|$ is analogous.

    	From \eqref{sd} it follows
    	\begin{equation}\label{z}
    		|\nabla s(x)| \leq \Lambda S(x)
    :=\Lambda\int_{\mathbb{R}^n}|x-y|^{\Lambda-1}u^{-p_1}(y)|y|^{\alpha}dy.
    	\end{equation}
    	
    	\textit{Case 1.} $x\in B_{h_0}(0)\setminus \{0\}$.

    \textit{Subcase 1.1.} $\Lambda \geq 1$.
    	
    	Clearly, we have
    	\begin{equation}\label{z31}
    		\begin{aligned}
    			S(x)=&\int_{B_{2|x|}(x)}|x-y|^{\Lambda-1}u^{-p_1}(y)|y|^{\alpha}dy\\
    		&+\int_{B_1(0)\setminus B_{2|x|}(x)}|x-y|^{\Lambda-1}u^{-p_1}(y)|y|^{\alpha}dy\\
    &+\int_{\mathbb{R}^n\setminus B_1(0)}|x-y|^{\Lambda-1}u^{-p_1}(y)|y|^{\alpha}dy\\
    		:=&S_1(x)+S_2(x)+S_3(x).
    		\end{aligned}
    	\end{equation}
    	
    	When $y\in B_{2|x|}(x)$, $|y| \leq 3|x|$, which leads to $|x-y| \leq 4|x|$. Therefore,
    by $u^{-p_1}(y)|y|^{\alpha} \in L^1(\mathbb{R}^n)$ (implied by \eqref{602}), we get
        \begin{equation}\label{z4}
    		S_1(x)\leq C|x|^{\Lambda-1}\int_{B_{2|x|}(x)}u^{-p_1}(y)|y|^{\alpha}dy
    		\leq  C|x|^{\Lambda-1}.
        \end{equation}

    	In view of $\alpha<n/(p_1-1)$ and $\Lambda>1$, we have
    \begin{equation*}
    n+\Lambda-1+(1-p_1)\alpha>0.
    \end{equation*}
    When $y\in B_1(0) \setminus B_{2|x|}(x)$, there holds $|y|\geq |x|$ and hence
    $|x-y| \leq |x|+|y| \leq 2|y|$. In view of \eqref{601},
    	we see $u(x)\geq C^{-1}|x|^{\alpha}$. Thus, we have
    \begin{equation}\label{vn32}
    S_2(x)
    		\leq C\int_{B_1(0)}
    		|y|^{\Lambda-1-p_1\alpha+\alpha}dy
    		\leq C.
    \end{equation}

    	Next, noticing \eqref{j4} and $0\leq \beta <{n}/(p_2-1)$,
    	we get ${1}/(p_1-1)< (\Lambda+\alpha)/(n-1)$, which leads to
    \begin{equation}\label{bn9}
    \Lambda-1-p_1(\Lambda+\alpha)+\alpha+n<0.
    \end{equation}   	
    	Therefore, by \eqref{601},
    	\begin{equation}\label{z5}
    		S_3(x) \leq C\int_{\mathbb{R}^n\setminus B_1(0)}
    |y|^{\Lambda-1-p_1(\Lambda+\alpha)+\alpha}dy \leq C.
    	\end{equation}
    	Inserting \eqref{z4}, \eqref{vn32} and \eqref{z5} into \eqref{z31}, we obtain
    	$$
    		|\nabla s(x)|\leq C,
        \quad x\in B_{h_0}(0)\setminus \{0\}.
    	$$

    \textit{Subcase 1.2.} $0<\Lambda<1$.

        Sometimes we denote $B_r(x)$ by $B(x,r)$.

    Clearly,
    	\begin{equation}\label{z3}
    		\begin{aligned}
    			S(x)=&\int_{B(x,|x|/2)}|x-y|^{\Lambda-1}u^{-p_1}(y)|y|^{\alpha}dy\\
    		&+\int_{B_1(0)\setminus B(x,|x|/2)}|x-y|^{\Lambda-1}u^{-p_1}(y)|y|^{\alpha}dy\\
    &+\int_{\mathbb{R}^n\setminus B_1(0)}|x-y|^{\Lambda-1}u^{-p_1}(y)|y|^{\alpha}dy\\
    		:=&S_4(x)+S_5(x)+S_6(x).
    		\end{aligned}
    	\end{equation}

    When $y \in B(x,|x|/2)$, $|x|/2 \leq |y| \leq 3|x|/2$. Therefore, by \eqref{601},
    $u^{-p_1}(y) \leq C|x|^{-p_1\alpha}$. Thus,
    $$
    S_4(x) \leq C|x|^{(1-p_1)\alpha}\int_{B(x,|x|/2)}|x-y|^{\Lambda-1}dy
    \leq C |x|^{(1-p_1)\alpha+n+\Lambda-1}.
    $$

    When $y \in B_1(0)\setminus B(x,|x|/2)$, $|x-y| \geq |x|/2$.
    Therefore, by \eqref{601} and $\alpha<n/(p_1-1)$, it follows
    $$
    S_5(x) \leq C|x|^{\Lambda-1} \int_{B_1(0)}|y|^{(1-p_1)\alpha}dy
    \leq C|x|^{\Lambda-1}.
    $$

    When $y \in \mathbb{R}^n \setminus B_1(0)$, by \eqref{601} and \eqref{bn9},
    $$
    S_6(x) \leq C \int_{\mathbb{R}^n \setminus B_1(0)} |y|^{\Lambda-1-p_1(\Lambda+\alpha)+\alpha}dy
    \leq C.
    $$
    Combining the estimates of $S_i(x)$ $(i=4,5,6)$, we obtain
    $$
    S(x) \leq C|x|^{\Lambda-1}, \quad x \in B_{h_0}(0)\setminus \{0\}.
    $$

    Now, Subcases 1.1 and 1.2 imply
    \begin{equation}\label{z6}
    		|\nabla s(x)|\leq C(1+
        |x|^{\Lambda-1}), \quad x\in B_{h_0}(0) \setminus \{0\}.
    	\end{equation}
    	
    	\textit{Case 2.} $x\in \mathbb{R}^n\setminus B_{R_0}(0)$.
    	
    	\textit{Subcase 2.1.} $\Lambda \geq 1$.
    	
    	In view of \eqref{600} and \eqref{601}, there holds
    	$$
    	\begin{aligned}
    		\frac{S(x)}{|x|^{\Lambda -1}}
    		\leq& \Lambda \int_{\mathbb{R}^n}(1+|y|^{\Lambda -1})u^{-p_1}(y)|y|^{\alpha}dy\\
    		\leq& C\int_{\mathbb{R}^n}(1+|y|^{\Lambda })u^{-p_1}(y)|y|^{\alpha}dy \leq C.
    	\end{aligned}
    	$$
    	Namely,
    	$$
    		S(x)\leq C|x|^{\Lambda-1}, \quad x\in \mathbb{R}^n\setminus B_{R_0}(0).
    	$$

    	\textit{Subcase 2.2.} $0<\Lambda<1$.
    	
        Clearly, we have
        \begin{equation}\label{z1}
    		\begin{aligned}
    		\frac{S(x)}{|x|^{\Lambda -1}}
    		\leq & \Lambda \int_{\mathbb{R}^n}\frac{|x|^{1-\Lambda }}{|x-y|^{1-\Lambda }}u^{-p_1}(y)|y|^{\alpha}dy\\
    		=&\Lambda \int_{\mathbb{R}^n\setminus B(x,\frac{|x|}{2})}
              \frac{|x|^{1-\Lambda }}{|x-y|^{1-\Lambda }}u^{-p_1}(y)|y|^{\alpha}dy\\
    		&+\Lambda \int_{ B(x,\frac{|x|}{2})}\frac{|x|^{1-\Lambda }}{|x-y|^{1-\Lambda }}u^{-p_1}(y)|y|^{\alpha}dy\\
    		:=&\Lambda F_1(x)+\Lambda F_2(x).
    	\end{aligned}
        \end{equation}
    	
    	When $y \in \mathbb{R}^n\setminus B(x,\frac{|x|}{2})$, there holds $|x-y| \geq {|x|}/{2}$.
        By \eqref{602} we get
    	\begin{equation}\label{613}
    		F_1(x)\leq C\int_{\mathbb{R}^n\setminus B(x,\frac{|x|}{2})}u^{-p_1}(y)|y|^{\alpha}dy \leq C.
    	\end{equation}

    	When $y \in B(x,\frac{|x|}{2})$, there holds $|x|/2 \leq |y| \leq {3|x|}/{2}$. Therefore,
    	by \eqref{601} we have
    	$$    	    		
        F_2(x) \leq C|x|^{1-\Lambda-p_1(\Lambda +\alpha)+\alpha}\int_{B(x,\frac{|x|}{2})}\frac{dy}{|x-y|^{1-\Lambda}}
        \leq C|x|^{n-p_1(\Lambda +\alpha)+\alpha}.    	
    	$$
    Noticing $\beta<n/(p_2-1)$, from
    \eqref{j4} we deduce
    $
    1/(p_1-1)<(\Lambda+\alpha)/n<(\Lambda+\alpha)/(n-\Lambda),
    $
    which implies
    $
    n-p_1(\Lambda +\alpha)+\alpha<0.
    $
    Therefore, by $|x| \geq R_0>2$, it follows
    $F_2(x) \leq C$.
    	Inserting this result and \eqref{613} into \eqref{z1}, we get
    	$$
    	S(x)\leq C|x|^{\Lambda-1}, \quad x\in \mathbb{R}^n\setminus B_{R_0}(0).
    	$$
    	
    	By Subcases 2.1 and 2.2, we obtain that
    	\begin{equation}\label{z7}
    		|\nabla s(x)|\leq C
        |x|^{\Lambda-1}, \quad x\in \mathbb{R}^n\setminus B_{R_0}(0).
    	\end{equation}

    	Obviously, $|\nabla s(x)| \leq C(h_0,R_0)$ when $x \in B_{R_0}(0) \setminus
    B_{h_0}(0)$. Combining this result with \eqref{z6} and \eqref{z7}, we derive \eqref{s}.
    	This completes the proof of Lemma \ref{le4}.
    \end{proof}

    To state the next results we need some notations.
    First, for a given real number $\lambda$, define
    $$
    T_{\lambda}=\{x \in \mathbb{R}^n|x_1=\lambda\}, \quad
    \Sigma_{\lambda}=\left\lbrace  x\in \mathbb{R}^n|x_1 \leq \lambda \right\rbrace.
    $$
    Write $x_{\lambda}=\{2\lambda-x_1, x_2, \cdots, x_n\}$ and $f_{\lambda}(x)=f(x_{\lambda})$.

    \begin{lemma}\label{le2}
        Let $(u,v)$ be a pair of positive solutions of \eqref{j3}.
         Then
        \begin{equation}\label{603}
            t(x)-t_{\lambda}(x)=
            \int_{\Sigma_{\lambda}}G(x,y)(s_{\lambda}^{-p_2}(y)
            |y_{\lambda}|^{(1-p_2)\beta}-s^{-p_2}(y)|y|^{(1-p_2)\beta})dy,
        \end{equation}
        \begin{equation}\label{604}
            s(x)-s_{\lambda}(x)=
            \int_{\Sigma_{\lambda}}G(x,y)(w_{\lambda}^{-p_1}(y)-w^{-p_1}(y))dy,
        \end{equation}
        where
        $G(x,y)=|x_{\lambda}-y|^{\Lambda }-|x-y|^{\Lambda }$.
    \end{lemma}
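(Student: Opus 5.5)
Since $|x|^{-\alpha}u(x)=\int|x-y|^\Lambda v^{-p_2}(y)|y|^\beta dy$, we have $t(x)=\int_{\mathbb{R}^n}|x-y|^{\Lambda}v^{-p_2}(y)|y|^{\beta}dy$. We rewrite the integrand in terms of $s$: because $v=|y|^\beta s$, it follows that $v^{-p_2}(y)|y|^\beta=s^{-p_2}(y)|y|^{(1-p_2)\beta}$. Similarly, $s(x)=\int|x-y|^\Lambda u^{-p_1}(y)|y|^\alpha dy$. With $u=|y|^{\alpha/p_1}w$ we get $u^{-p_1}|y|^\alpha=w^{-p_1}(y)|y|^{-\alpha}|y|^{\alpha}=w^{-p_1}(y)$. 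So the two identities are instances of the same computation: for an integrable density $g\ge 0$ and $f(x)=\int_{\mathbb{R}^n}|x-y|^\Lambda g(y)dy$, we will show
$$f(x)-f_\lambda(x)=\int_{\Sigma_\lambda}G(x,y)\bigl(g(y_\lambda)-g(y)\bigr)dy.$$
Here $g(y)=s^{-p_2}(y)|y|^{(1-p_2)\beta}$ in the first case, so that $g(y_\lambda)=s_\lambda^{-p_2}(y)|y_\lambda|^{(1-p_2)\beta}$, and $g=w^{-p_1}$ in the second.

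To prove the general identity, split $\mathbb{R}^n=\Sigma_\lambda\cup(\mathbb{R}^n\setminus\Sigma_\lambda)$. On the complement, use the reflection $y\mapsto y_\lambda$, which maps $\mathbb{R}^n\setminus\Sigma_\lambda$ onto $\Sigma_\lambda$ up to the null set $T_\lambda$ and has unit Jacobian. This gives
$$f(x)=\int_{\Sigma_\lambda}\bigl(|x-y|^\Lambda g(y)+|x-y_\lambda|^\Lambda g(y_\lambda)\bigr)dy.$$
Reflection is an isometry, so $|x-y_\lambda|=|x_\lambda-y|$ and $|x_\lambda-y_\lambda|=|x-y|$. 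Applying this at the point $x_\lambda$ yields
$$f(x_\lambda)=\int_{\Sigma_\lambda}\bigl(|x_\lambda-y|^\Lambda g(y)+|x-y|^\Lambda g(y_\lambda)\bigr)dy.$$
Subtracting gives $f(x)-f(x_\lambda)=\int_{\Sigma_\lambda}\bigl(|x_\lambda-y|^\Lambda-|x-y|^\Lambda\bigr)\bigl(g(y_\lambda)-g(y)\bigr)dy$, which is exactly the claimed form with $G$.

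The only point needing care is the legitimacy of splitting and subtracting, i.e.\ that each integral above converges absolutely. This follows from Lemma \ref{le1}(i): $(1+|y|^\Lambda)v^{-p_2}|y|^\beta\in L^1$ and $(1+|y|^\Lambda)u^{-p_1}|y|^\alpha\in L^1$. Combined with $|x-y|^\Lambda\le C(x)(1+|y|^\Lambda)$, this shows that every term is finite for each fixed $x$. The identities hold wherever \eqref{j3} holds, hence a.e.\ (and everywhere, by the continuity from Lemma \ref{lem3}). I do not expect a genuine obstacle; the main thing to get right is the bookkeeping of the weights, namely that the weight $|y|^\alpha$ is absorbed exactly into $w$, while the $\beta$-weight survives as $|y|^{(1-p_2)\beta}$ and becomes $|y_\lambda|^{(1-p_2)\beta}$ under reflection.
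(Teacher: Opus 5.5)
Your proposal is correct and takes the same route as the paper. The paper splits at $T_\lambda$, reflects the half-space, uses $|x_\lambda-y|=|x-y_\lambda|$ and $|x_\lambda-y_\lambda|=|x-y|$, and subtracts, treating $t$ explicitly and $s$ "similarly". Your version differs only in presentation: you handle both identities at once through a generic density $g$, and you state the absolute convergence from Lemma \ref{le1}(i) that the paper leaves implicit.
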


    \begin{proof}
        From \eqref{j3}, we have
        $$
        \begin{aligned}
            t(x)=&\int_{\Sigma_{\lambda}}|x-y|^{\Lambda }s^{-p_2}(y)|y|^{(1-p_2)\beta}dy
            +\int_{\mathbb{R}^n\setminus \Sigma_{\lambda}}|x-y|^{\Lambda }s^{-p_2}(y)|y|^{(1-p_2)\beta}dy\\
            =&\int_{\Sigma_{\lambda}}|x-y|^{\Lambda }s^{-p_2}(y)|y|^{(1-p_2)\beta}dy
            +\int_{\Sigma_{\lambda}}|x-y_{\lambda}|^{\Lambda }s_{\lambda}^{-p_2}(y)|y_{\lambda}|^{(1-p_2)\beta}dy.
        \end{aligned}
        $$
        Similarly,
        $$
            t_{\lambda}(x)
            =\int_{\Sigma_{\lambda}}|x_{\lambda}-y|^{\Lambda }s^{-p_2}(y)|y|^{(1-p_2)\beta}dy
            +\int_{\Sigma_{\lambda}}|x_{\lambda}-y_{\lambda}|^{\Lambda}
            s_{\lambda}^{-p_2}(y)|y_{\lambda}|^{(1-p_2)\beta}dy.
        $$
        Noticing $|x_{\lambda}-y|=|x-y_{\lambda}|$ and $|x_{\lambda}-y_{\lambda}|=|x-y|$, we have \eqref{603}.

        Similarly, we can also obtain \eqref{604}.
    \end{proof}

\section{Proof Theorem \ref{Rth3}}

In this section, we prove the radial symmetry of solutions of \eqref{j3} by the method of moving planes.

        {\bf Step 1.} For sufficiently negative $\lambda\leq N<0$, we claim that
        \begin{equation}\label{Step1}
            t(x)\geq t_{\lambda}(x), \quad s(x)\geq s_{\lambda}(x), \quad w(x)\geq w_{\lambda}(x),
            \quad for \; all \; x\in \Sigma_{\lambda}.
        \end{equation}

        First, we consider the relationship between $t(x)$ and $t_{\lambda}(x)$.

        We claim that
        \begin{equation}\label{606}
            \lim_{|x|\to \infty}|x|^{-\Lambda }(\nabla t(x)\cdot x-\frac{\Lambda }{2}t(x))>0.
        \end{equation}
        Here, we consider $i=1$ and fix $x_2, x_3, \cdots, x_n$.
        According to Lemma \ref{lem3}, there holds
        $$
            \left|\frac{\partial t(x)}{\partial x_1}\right|
            =\left|\Lambda \int_{\mathbb{R}^n}|x-y|^{\Lambda-1}
            \frac{(x_1-y_1)}{|x-y|}v^{-p_2}(y)|y|^{\beta}dy\right|.
        $$
        Write
        $$
        F(x,y):=\frac{|x-y|^{\Lambda-1}(x_1-y_1)}{|x_1|^{\Lambda-1}|x-y|}.
        $$
        Then for a.e. $y \in \mathbb{R}^n$, there holds
        $$
        \lim_{|x_1| \to \infty} F(x,y)=1.
        $$
        Therefore, when $|x_1|$ is sufficiently large,
        $$
        |F(x,y)v^{-p_2}(y)|y|^{\beta}|
        \leq 2v^{-p_2}(y)|y|^{\beta} \in L^1(\mathbb{R}^n).
        $$
        Applying the dominated convergence theorem and \eqref{602}, we have
        \begin{equation}
            \lim_{|x_1|\to \infty}\frac{|\frac{\partial t(x)}{\partial x_1}|}{|x_1|^{\Lambda -1}}
            = \Lambda\int_{\mathbb{R}^n}v^{-p_2}(y)|y|^{\beta}dy
            =\Lambda\lim_{|x| \to \infty}\frac{t(x)}{|x|^{\Lambda }},
        \end{equation}
        which implies \eqref{606}.

        Clearly, \eqref{606} shows that there exists sufficiently negative $N<0$ such that
        \begin{equation}\label{605}
            \nabla(|x|^{-\frac{\Lambda }{2}}t(x))\cdot x
            =|x|^{-\frac{\Lambda }{2}}(\nabla t(x)\cdot x-\frac{\Lambda }{2}t(x))>0,
            \quad \forall \; x \in \Sigma_N.
        \end{equation}
        When $x_1 \leq N$ and $2\lambda-x_1 \leq N$, from \eqref{605},
        it follows that for $x\in \Sigma_{\lambda}$,
        $$
        |x|^{-\frac{\Lambda }{2}}t(x)>|x_{\lambda}|^{-\frac{\Lambda }{2}}t_{\lambda}(x),
        $$
        which implies
        $$
        t(x)\geq t_{\lambda}(x), \quad \forall x\in \Sigma_{\lambda}.
        $$
        When $x_1 \leq N$ and $2\lambda-x_1 \geq N$, from \eqref{601}, there holds
        $$
        \begin{aligned}
            t(x)\geq & \frac{1+|x_1|^{\Lambda }}{C_1}
            \geq \frac{1+|\lambda|^{\Lambda }}{C_1}\\
            \geq &C_1(1+|N|^{\Lambda })
            \geq C_1(1+|2\lambda-x_1|^{\Lambda })
            \geq t_{\lambda}(x).
        \end{aligned}
        $$
        Thus, for all $x \in \Sigma_{\lambda}$,
        $$
        t(x)\geq t_{\lambda}(x).
        $$
        In view of $t(x)=|x|^{(1/p_1-1)\alpha}w(x)$ and $p_1>1$, we obtain
        $$
        w(x)\geq w_{\lambda}(x), \quad \forall \; x\in \Sigma_{\lambda}.
        $$
        Inserting this result into \eqref{604}, we have
        $$
        s(x) \geq s_{\lambda}(x), \quad \forall \; x\in \Sigma_{\lambda}.
        $$
        Thus, we obtain \eqref{Step1}.

        {\bf Step 2.} According to Step 1, we can start from the negative infinity of the $x_1$-direction
        and move the plane $T_\lambda$ to the right as long as \eqref{Step1} holds.
        Define
        $$
        \lambda_0=\sup\{\mu<0; s(x)\geq s_{\lambda}(x), w(x)\geq w_{\lambda}(x)
        \;for \; all \; \lambda<\mu, \;x_1\leq \lambda\}.
        $$

        For $x\in \Sigma_{\lambda_0}$, there are the following four cases:

        (i):  $w(x)=w_{\lambda_0}(x)$, $s(x)=s_{\lambda_0}(x)$;

        (ii): $w(x)=w_{\lambda_0}(x)$, $s(x)>s_{\lambda_0}(x)$;

        (iii): $w(x)>w_{\lambda_0}(x)$, $s(x)=s_{\lambda_0}(x)$;

        (iv): $w(x)>w_{\lambda_0}(x)$, $s(x)>s_{\lambda_0}(x)$.

        When (i) is true, we are done. In view of \eqref{604},
        (ii) and (iii) cannot happen.
        In the following, we prove that (iv) is also false.

        We carry out the argument by contradiction.
        Assume that (iv) is true.

        When $\lambda_0=0$, by the same argument above we can also move $T_\lambda$ from
        positive infinity of the $x_1-$direction to the left to $T_0$, and hence
        obtain the opposite conclusion to (iv). It is impossible.

        When $\lambda_0<0$, there exists an $\varepsilon_1 \in (0,1/4)$ such that
        for all $\lambda_0+\varepsilon_1<0$.
        We will show that there exists an $\varepsilon \in (0,\varepsilon_1)$ such that
        for all $\lambda_0<\lambda<\lambda_0+\varepsilon$,
        \begin{equation}\label{616}
            s(x)\geq s_{\lambda}(x), \;\; w(x)\geq w_{\lambda}(x),
            \quad \forall \; x\in \Sigma_{\lambda},
        \end{equation}
        which contradicts with the definition of $\lambda_0$.
        Therefore, (iv) is also not true, and hence we obtain
        \begin{equation}\label{Imp}
        s(x) \equiv s_{\lambda_0}(x),  \quad w(x)\equiv w_{\lambda_0}(x),
        \quad x \in \Sigma_{\lambda_0}.
        \end{equation}

        We divide the proof of \eqref{616} into three cases.
        Let $\lambda \in (\lambda_0, \lambda_0+\varepsilon_1)$.

        \textit{Case 1:} $x \in \Sigma_{\lambda} \setminus B(0,R_0)$,
        where $R_0 > 4(|\lambda_0|+1)$ is suitably large.

        Now, $|x_{\lambda}|\geq R_0/2$. By the same derivation of \eqref{605}, there holds
        $$
        |x|^{-\frac{\Lambda }{2}}t(x)>|x_{\lambda}|^{-\frac{\Lambda }{2}}t_{\lambda}(x).
        $$
        In view of $|x|>|x_\lambda|$ and $t(x)=|x|^{(1/p_1-1)\alpha}w(x)$, we get
        $$
        w(x)\geq w_{\lambda}(x), \quad |x|\geq R_0.
        $$

        \textit{Case 2:} $x \in \Sigma_{\lambda_0-1} \cap B(0,R_0)$.

        In view of $p_2>1$, by (iv) we have
        \begin{equation*}
        s_{\lambda_0}^{-p_2}(x)|x_{\lambda_0}|^{(1-p_2)\beta}-s^{-p_2}(x)|x|^{(1-p_2)\beta} > 0,
        \quad w_{\lambda_0}^{-p_1}(x)-w^{-p_1}(x)>0.
        \end{equation*}
        In addition, we have
        \begin{equation*}
        |x_{\lambda_0}-y|^{\Lambda }-|x-y|^{\Lambda }>0, \quad
        \forall  \ y \in \Sigma_{\lambda_0}.
        \end{equation*}
        Inserting these results into \eqref{603} and \eqref{604}, we obtain that for each
        $x \in \Sigma_{\lambda_0-1} \cap B(0,R_0)$, there exists $\sigma_x>0$ such that
        \begin{equation}\label{vb5}
        t(x)-t_{\lambda_0}(x) \geq \sigma_x, \quad s(x)-s_{\lambda_0}(x) \geq \sigma_x.
        \end{equation}

        Since $t$ and $s$ are continuous with respect to $\lambda$, we can find
        a suitably small constant $\varepsilon_2>0$ such that for $\lambda \in (\lambda_0,
        \lambda_0+\varepsilon_2)$,
        $$
        |t_{\lambda_0}(x)-t_{\lambda}(x)| \leq \frac{\sigma_x}{2},\quad
        |s_{\lambda_0}(x)-s_{\lambda}(x)| \leq \frac{\sigma_x}{2}.
        $$
        Combining with \eqref{vb5}, we see that
        \begin{equation}\label{617}
            t(x)-t_{\lambda}(x)\geq 0
        \end{equation}
        and
        \begin{equation}\label{622}
            s(x)-s_{\lambda}(x)\geq 0,
        \end{equation}
        for all $x \in \Sigma_{\lambda_0-1} \cap B(0,R_0)$,
        and $\lambda_0 \leq \lambda \leq \lambda_0+\varepsilon_2$.
        Therefore, from  $t(x)=|x|^{(1/p_1-1)\alpha}w(x)$ and \eqref{617}, it still follows
        \begin{equation*}
            w(x)-w_{\lambda}(x) \geq 0
        \end{equation*}
        for all $x \in \Sigma_{\lambda_0-1} \cap B(0,R_0)$, and $\lambda_0 \leq \lambda
        \leq \lambda_0+\varepsilon_2$.

        \textit{Case 3:} $x \in (\Sigma_{\lambda} \setminus \Sigma_{\lambda_0-1})\cap B(0,R_0)$.

        To deduce $t(x) \geq t_{\lambda}(x)$, by Lemma \ref{le2} we get
        \begin{equation}\label{vb1}
        \begin{aligned}
            &t(x)-t_{\lambda}(x)\\=
            &\int_{\Sigma_{\lambda}}G(x, y)(s_{\lambda}^{-p_2}(y)
            |y_{\lambda}|^{(1-p_2)\beta}-s^{-p_2}(y)|y|^{(1-p_2)\beta})dy\\
            \geq &\int_{\Sigma_{\lambda}\setminus \Sigma_{\lambda_0-1}}
            G(x,y)(s_{\lambda}^{-p_2}(y)|y_{\lambda}|^{(1-p_2)\beta}-s^{-p_2}(y)|y|^{(1-p_2)\beta})dy\\
            &+\int_{(\Sigma_{\lambda_0-2}\setminus \Sigma_{\lambda_0-3}) \cap B(0,R_0)}
            G(x,y)(s_{\lambda}^{-p_2}(y)|y_{\lambda}|^{(1-p_2)\beta}-s^{-p_2}(y)|y|^{(1-p_2)\beta})dy\\
            :=&J_1(x)+J_2(x).
        \end{aligned}
        \end{equation}
        We will verify $J_2(x) \geq 0$ and $J_1(x)+J_2(x) \geq 0$.

        {\it Estimate of} $J_2(x)$.

        Take $\widetilde{y} \in T_{\lambda_0-3}$. By \eqref{622},
        we can find $\sigma>0$ such that for any $y \in (\Sigma_{\lambda_0-2}
        \setminus\Sigma_{\lambda_0-3}) \cap B(0,R_0)$,
        \begin{equation}\label{623}
            s_{\lambda}^{-p_2}(y)|y_{\lambda}|^{(1-p_2)\beta}-s^{-p_2}(y)|y|^{(1-p_2)\beta}
            \geq s^{-p_2}(\widetilde{y})(|\widetilde{y}_{\lambda}|^{(1-p_2)\beta}
            -|\widetilde{y}|^{(1-p_2)\beta})
            \geq \sigma.
        \end{equation}
        In addition, for any
        $$
        x \in (\Sigma_{\lambda}\setminus\Sigma_{\lambda_0-1}) \cap B(0,R_0) \quad
        and
        \quad
        y \in (\Sigma_{\lambda_0-2}
        \setminus\Sigma_{\lambda_0-3}) \cap B(0,R_0),
        $$
        we have
        $$
        1 \leq |\xi-y| \leq \max\{2R_0,5\}
        $$
        when $\xi=\theta x_\lambda +(1-\theta)x$ (here $\theta \in (0,1)$).
        Therefore, using the mean value theorem, we can find $\eta>0$ such that
        \begin{equation*}
            G(x,y) =\Lambda |\xi-y|^{\Lambda-1} (2\lambda-2x_1)
            \geq \eta(\lambda-x_1).
        \end{equation*}
        By this result and \eqref{623}, there holds
        \begin{equation}\label{621}
            J_2(x)\geq c_*\sigma\eta(\lambda-x_1),
        \end{equation}
        where $c_*, \sigma, \eta$ are positive absolute constants.

        Next, we estimate $J_1(x)$.

        In view of (iv), we have
        $$
        s_{\lambda_0}^{-p_2}(y)|y_{\lambda_0}|^{(1-p_2)\beta} \geq s^{-p_2}(y)|y|^{(1-p_2)\beta}
        $$
        when $y \in \Sigma_{\lambda}\setminus \Sigma_{\lambda_0-1}$.
        Therefore,
        \begin{equation}\label{bn2}
        \begin{aligned}
            &J_{1}(x)\\
            \geq & \int_{(\Sigma_{\lambda}\setminus \Sigma_{\lambda_0-1})\setminus B_M(0)}
            G(x,y)(s_{\lambda}^{-p_2}(y)|y_{\lambda}|^{(1-p_2)\beta}-s^{-p_2}(y)|y|^{(1-p_2)\beta})dy\\
            &+ \int_{(\Sigma_{\lambda}\setminus \Sigma_{\lambda_0-1})\cap B_M(0)}
            G(x,y)(s_{\lambda}^{-p_2}(y)|y_{\lambda}|^{(1-p_2)\beta}
            -s_{\lambda_0}^{-p_2}(y)|y_{\lambda_0}|^{(1-p_2)\beta})dy\\
            :=& J_{11}(x)+J_{12}(x),
        \end{aligned}
        \end{equation}
        where $M>R_0$ is a large constant which will be determined later.

        {\it Estimate of} $J_{11}(x)$.

        In view of
        $$
        x \in (\Sigma_{\lambda}\setminus\Sigma_{\lambda_0-1})\cap B(0,R_0) \quad
        and
        \quad
        y \in (\Sigma_{\lambda}\setminus\Sigma_{\lambda_0-1})\setminus B(0,M),
        $$
        we have $|x-y|\sim |x_{\lambda}-y|\sim|y|$ when $M$ is sufficiently large. Consequently, it follows
        $$
        \begin{aligned}
            G(x,y)
            =&|x_{\lambda}-y|^{\Lambda }-|x-y|^{\Lambda }\\
            \leq & C\max\{|x_{\lambda}-y|^{\Lambda -1}, |x-y|^{\Lambda -1}\}||x_{\lambda}-y|-|x-y||\\
            \leq & C|y|^{\Lambda -1}(\lambda-x_1),
        \end{aligned}
        $$
        where $C>0$ is independent of $M$.
        By this result and \eqref{601}, we obtain that
        $$\begin{aligned}
                |J_{11}(x)|\leq
                &\left|\int_{(\Sigma_{\lambda}\setminus \Sigma_{\lambda_0-1})\setminus B(0,M)}
                G(x,y)(s_{\lambda}^{-p_2}(y)|y_{\lambda}|^{(1-p_2)\beta}
                -s^{-p_2}(y)|y|^{(1-p_2)\beta})dy\right|\\
                \leq&C(\lambda-x_1)\int_{\mathbb{R}^n\setminus B(0,M)}
                |y|^{\Lambda -1}|y|^{-p_2\Lambda +(1-p_2)\beta}dy.
            \end{aligned}
        $$
        By \eqref{vb7}, there holds
        \begin{equation}\label{bn5}
        \Lambda -1-p_2\Lambda +(1-p_2)\beta+n<0.
        \end{equation}
        Therefore, it follows that
        $$
        |J_{11}(x)| \leq C_1 M^{\Lambda -1-p_2\Lambda +(1-p_2)\beta+n}(\lambda-x_1),
        $$
        where $C_1>0$ is an absolute constant. Take $M>0$ suitably large such that
        $$
        C_1 M^{\Lambda -1-p_2\Lambda +(1-p_2)\beta+n} \leq \frac{c_*\sigma\eta}{4}.
        $$
        Thus, there holds
        \begin{equation}\label{619}
        |J_{11}(x)| \leq \frac{c_*\sigma\eta}{4}(\lambda-x_1),
        \end{equation}

        {\it Estimate of} $J_{12}(x)$.
        Now,
        $$
        x\in (\Sigma_{\lambda}\setminus\Sigma_{\lambda_0-1})\cap B(0,R_0)\quad and \quad
        y\in (\Sigma_{\lambda}\setminus\Sigma_{\lambda_0-1})\cap B(0,M).
        $$
        Clearly, the mean value theorem implies that
        there exists $\xi$ satisfying $|x-y|<|\xi-y|<|x_\lambda-y|$ such that
        \begin{equation}\label{bn3}
        G(x,y) \leq 2\Lambda
        |\xi-y|^{\Lambda-1}(\lambda-x_1) \leq
        \begin{cases}
        	2\Lambda|x-y|^{\Lambda-1}(\lambda-x_1), &as \  0<\Lambda<1,\\
        	2\Lambda|x_{\lambda}-y|^{\Lambda-1}(\lambda-x_1), &as \ \Lambda \geq 1.
        \end{cases}
        \end{equation}

        We estimate $J_{12}$ in two cases.

        (I) $\Lambda \geq 1$.

        Clearly, \eqref{bn3} shows that
        $G(x,y)$ in the improper integrals has no singularity.
        In addition, $|y|^{(1-p_2)\beta}s^{-p_2}(y)=|y|^\beta v^{-p_2}(y) \in L^1(R^n)$ (implied by \eqref{602}).
        Therefore, by the absolute continuity of an integral,
        we can find a suitably small $h_0 \in (0,1/2)$ such that
        \begin{equation}\label{bn4}
        \begin{aligned}
        &\int_{B(0_\lambda,h_0)}[(|y_\lambda|^{(1-p_2)\beta}s_\lambda^{-p_2}(y)
        +|y_{\lambda_0}|^{(1-p_2)\beta}s_{\lambda_0}^{-p_2}(y)]dy\\
        &<[8\Lambda(2R_0+2|\lambda_0|+1)^{\Lambda-1}]^{-1}c_*\sigma\eta.
        \end{aligned}
        \end{equation}
        Now we divide $J_{12}(x)$ into two terms
        $$
        J_{12}(x)=\sum_{i=1}^2\int_{\Omega_i}
        G(x,y)(s_{\lambda}^{-p_2}(y)|y_{\lambda}|^{(1-p_2)\beta}
        -s_{\lambda_0}^{-p_2}(y)|y_{\lambda_0}|^{(1-p_2)\beta})dy
        :=H_1+H_2,
        $$
        where
         $$
         \Omega_1=[(\Sigma_{\lambda}\setminus \Sigma_{\lambda_0-1}) \cap B(0_\lambda,h_0),
         $$
         $$
         \Omega_2=(\Sigma_{\lambda}\setminus \Sigma_{\lambda_0-1}) \cap [B(0,M) \setminus B(0_\lambda,h_0)].
         $$

         When $y \in \Omega_1$, by \eqref{bn3} we see
         $$
         G(x,y) \leq 2\Lambda |x_\lambda-y|^{\Lambda-1}(\lambda-x_1)
         \leq 2\Lambda (2R_0+2|\lambda_0|+h_0)^{\Lambda-1}(\lambda-x_1).
         $$
         Therefore, by \eqref{bn4} we get
         $$
         |H_1| \leq c_*\sigma\eta(\lambda-x_1)/4.
         $$

         When $y \in \Omega_2$, there holds $|x_\lambda-y|<2M$.
         In addition, we can find $\epsilon_0 \in (0,1/3)$ such that
        $$
        |y_{\lambda}|/2 \leq |y_{\lambda_0}| \leq 2|y_{\lambda}|,
        \quad \forall \lambda \in (\lambda_0,\lambda_0+\epsilon_0).
        $$
        Noting
        \begin{equation}\label{bn8}
        \begin{aligned}
        &|s_{\lambda}^{-p_2}(y)|y_{\lambda}|^{(1-p_2)\beta}
        -s_{\lambda_0}^{-p_2}(y)|y_{\lambda_0}|^{(1-p_2)\beta}| \\
        \leq & s_{\lambda}^{-p_2}(y)||y_{\lambda}|^{(1-p_2)\beta}
        -|y_{\lambda_0}|^{(1-p_2)\beta}|
        + |y_{\lambda_0}|^{(1-p_2)\beta}|s_{\lambda}^{-p_2}(y)-s_{\lambda_0}^{-p_2}(y)|\\
        \leq & C\{s_{\lambda}^{-p_2}(y)[|y_{\lambda}|^{(1-p_2)\beta-1}
        +|y_{\lambda_0}|^{(1-p_2)\beta-1}]\\
        &+|y_{\lambda_0}|^{(1-p_2)\beta}[s_{\lambda}^{-p_2-1}(y)
        +s_{\lambda_0}^{-p_2-1}(y)]|\nabla s|\} (\lambda-\lambda_0),
        \end{aligned}
        \end{equation}
        we can deduce from \eqref{bn3}, \eqref{601} and \eqref{s}
        that for all $\lambda \in (\lambda_0,\lambda_0+\epsilon_0)$,
        $$\begin{aligned}
        |H_2| \leq & \int_{\Omega_2}2\Lambda(\lambda-x_1)|x_\lambda-y|^{\Lambda-1}
        |s_{\lambda}^{-p_2}(y)|y_{\lambda}|^{(1-p_2)\beta}
        -s_{\lambda_0}^{-p_2}(y)|y_{\lambda_0}|^{(1-p_2)\beta}|dy\\
        \leq & C(\lambda-x_1)M^{\Lambda-1}(\lambda-\lambda_0)\\
        &\cdot [\int_{B_1(0_\lambda) \setminus B_{h_0}(0_\lambda)}|y_\lambda|^{(1-p_2)\beta}
        (|y_\lambda|^{-1}+1)dy\\
        &+\int_{B_{2M}(0_\lambda) \setminus B_1(0_\lambda)}(|y_\lambda|^{(1-p_2)\beta-1-p_2\Lambda}
        +|y_\lambda|^{(1-p_2)\beta-(p_2+1)\Lambda+\Lambda-1})dy].
        \end{aligned}
        $$
        By \eqref{vb7}, it follows
        $$
        |H_2| \leq  C_2(\lambda-x_1)(\lambda-\lambda_0)M^{\Lambda-1}(1+h_0^{n+\theta_1}+M^{n+\theta_2}),
        $$
        where $C_2>0$ is an absolute constant, and
        $$
        \theta_1:=(1-p_2)\beta-1,\quad
        \theta_2:=(1-p_2)\beta-1-p_2\Lambda.
        $$

         Thus, the estimates of $H_1$ and $H_2$ show that
         \begin{equation}\label{626}
         	|J_{12}(x)|\leq [c_*\sigma\eta/4+C_2(\lambda-\lambda_0)M^{\Lambda-1}
         (1+h_0^{n+\theta_1}+M^{n+\theta_2})](\lambda-x_1).
         \end{equation}

        (II) $0<\Lambda<1$.

        (II.1) $x \neq 0_{\lambda_0}$.

        Write $r_x:=|x-0_{\lambda_0}|/2>0$. We can find small $\epsilon^* \in (0,1/4)$
        such that $0_\lambda \not \in B(x,r_x)$ for $\lambda \in (\lambda_0,\lambda_0+\epsilon^*)$.
        By the absolute continuity of an integral,
        we can also find a suitably small $h_0 \in (0,1/2)$ such that
        \begin{equation}\label{bn44}
        \int_{B(0_\lambda,h_0)}[(|y_\lambda|^{(1-p_2)\beta}s_\lambda^{-p_2}(y)
        +|y_{\lambda_0}|^{(1-p_2)\beta}s_{\lambda_0}^{-p_2}(y)]dy
        <[8\Lambda r_x^{\Lambda-1}]^{-1}c_*\sigma\eta.
        \end{equation}
        In addition, we can find $h \in (0,\min\{h_0,r_x\})$ such that
        $B(0_\lambda,h) \subset B(0,M)$ and $B(0_\lambda,h) \cap B(x,r_x)=\emptyset$.
        Now we divide $J_{12}(x)$ into three terms
        $$\begin{aligned}
        J_{12}(x)&=\sum_{i=3}^5 \int_{\Omega_i} G(x,y)(s_{\lambda}^{-p_2}(y)|y_{\lambda}|^{(1-p_2)\beta}
                -s_{\lambda_0}^{-p_2}(y)|y_{\lambda_0}|^{(1-p_2)\beta})dy\\
                &:=H_3+H_4+H_5,
                \end{aligned}
        $$
        where
        $$
        \Omega_3=[(\Sigma_{\lambda}\setminus \Sigma_{\lambda_0-1})] \cap B(0_\lambda,h),
        $$
        $$
        \Omega_4=[(\Sigma_{\lambda}\setminus \Sigma_{\lambda_0-1}) \cap B(0,M)]
        \setminus [B(x,r_x) \cup B(0_\lambda,h)],
        $$
        $$
        \Omega_5=(\Sigma_{\lambda}\setminus \Sigma_{\lambda_0-1}) \cap B(x,r_x).
        $$
        By \eqref{bn3} and \eqref{bn44} we get
        $$
        |H_3| \leq c_*\sigma\eta(\lambda-x_1)/4.
        $$
        Similar to the estimate of $H_2$, we also get
        $$
        |H_4| \leq  C(\lambda-x_1)(\lambda-\lambda_0)r_x^{\Lambda-1}(1+h^{n+\theta_1}+M^{n+\theta_2}),
        \quad \forall \lambda \in (\lambda_0,\lambda_0+\epsilon_0).
        $$

        When $y \in \Omega_5$, we see $y \in B_M(0) \setminus B_h(0_\lambda)$, which
        leads to $h \leq |y_\lambda| \leq 2M$. Therefore, by \eqref{bn8} we see that
        for all $\lambda \in (\lambda_0,\lambda_0+\epsilon_0)$,
        $$
        |H_5| \leq C (\lambda-x_1) (\lambda-\lambda_0) \left(\max_{h \leq |x| \leq 2M}L(x)\right)
        \int_{B(x,r_x)}|x-y|^{\Lambda-1}dy,
        $$
        where
        $$
        L(x):=s^{-p_2}(x) |x|^{(1-p_2)\beta-1}+|x|^{(1-p_2)\beta} s^{-p_2-1}(x)
         |\nabla s(x)|.
        $$
        By \eqref{601} and \eqref{s} we get
        $$
        \max_{h \leq |x| \leq 2M}L(x) \leq C(1+h^{\theta_1}+M^{\theta_2}).
        $$
        Therefore,
        $$
        |H_5| \leq Cr_x^{n+\Lambda-1}(\lambda-x_1)(1+h^{\theta_1}+M^{\theta_2}) (\lambda-\lambda_0).
        $$

        By the estimates of $H_i$ $(i=3,4,5)$, we can see that for
        all $\lambda \in (\lambda_0, \lambda_0+\min\{\epsilon_0,\epsilon^*\})$,
        there holds
        \begin{equation}\label{625}
        \begin{aligned}
        |J_{12}(x)| \leq (\lambda-x_1) (\frac{c_*\sigma\eta}{4}+
        &C_3r_x^{\Lambda-1}(\lambda-\lambda_0)\\
        &\cdot [(1+h^{\theta_1}+M^{\theta_2})+r_x^n(1+h^{n+\theta_1}+M^{n+\theta_2})] ).
        \end{aligned}
        \end{equation}
        Here $C_3>0$ is an absolute constant.

          (II.2) $x=0_{\lambda_0}$.

          Now $\xi \neq 0_{\lambda_0}$ and we replace $x$ by $\xi$ in the argument of (II.1).
          By \eqref{bn3} we can still obtain the estimates of $H_i$ ($i=1,2,3,4,5$).
        Thus, similar to \eqref{625}, we also have
        \begin{equation}\label{620}
        \begin{aligned}
        |J_{12}(x)| \leq (\lambda-x_1) (\frac{c_*\sigma\eta}{4}+
        &C_4r_\xi^{\Lambda-1}(\lambda-\lambda_0)\\
        &\cdot [(1+h^{\theta_1}+M^{\theta_2})+r_\xi^n(1+h^{n+\theta_1}+M^{n+\theta_2})] ).
        \end{aligned}
        \end{equation}
        Here $r_\xi=|\xi-0_{\lambda_0}|/2$, and $C_4>0$ is an absolute constant.

        {\it Proof of \eqref{616}.}

        By \eqref{626}, \eqref{625} and \eqref{620}, we can find
        $\varepsilon_3 \in (0, \min\{\epsilon_0,\epsilon^*,\epsilon_*\})$
        such that for all $\lambda \in (\lambda_0,\lambda_0+\varepsilon_3)$, there holds
        \begin{equation}\label{cxv}
        |J_{12}(x)| \leq \frac{c_*\sigma\eta}{2}(\lambda-x_1),
        \end{equation}
        where
        $$\begin{aligned}
        \epsilon_*=\frac{c_*\sigma\eta}{4}(\max\{&C_2M^{\Lambda-1}(1+h_0^{n+\theta_1}+M^{n+\theta_2}),\\
        &C_3r_x^{\Lambda-1}[(1+h^{\theta_1}+M^{\theta_2})+r_x^n(1+h^{n+\theta_1}+M^{n+\theta_2})],\\
        &C_4r_\xi^{\Lambda-1}[(1+h^{\theta_1}+M^{\theta_2})+r_\xi^n(1+h^{n+\theta_1}+M^{n+\theta_2})]\})^{-1}.
        \end{aligned}
        $$

        Combining \eqref{621}, \eqref{619} and \eqref{cxv} with \eqref{vb1} and \eqref{bn2},
        we obtain that for all $\lambda \in (\lambda_0,\lambda_0+\varepsilon_3)$,
        $$
        t(x)-t_{\lambda}(x) \geq (\lambda-x_1)[c_*\sigma\eta(1-1/4-1/2)] \geq 0, \quad
        x \in (\Sigma_{\lambda} \setminus \Sigma_{\lambda_0-1})\cap B(0,R_0).
        $$
        In view of the definition of $w(x)$, we see that
        $$
        w(x) \geq w_{\lambda}(x), \quad
        x \in (\Sigma_{\lambda} \setminus \Sigma_{\lambda_0-1})\cap B(0,R_0).
        $$

        Combining the results of Cases 1, 2, and 3,
        we derive that for all $\lambda \in(\lambda_0,\lambda_0+\min\{\varepsilon_1,\varepsilon_2,\varepsilon_3\})$,
        \begin{equation}\label{nm}
        w(x)\geq w_{\lambda}(x), \quad x \in \Sigma_{\lambda}.
        \end{equation}
        Inserting this result into \eqref{604}, we have
        $$
        s(x)\geq s_{\lambda}(x), \quad x \in \Sigma_{\lambda}.
        $$
        This result and \eqref{nm} imply that \eqref{616} holds.

        {\bf Step 3.} We claim $\lambda_0=0$.

        Otherwise, $\lambda_0<0$.
        Now, \eqref{Imp} holds.
        In view of $|x_{\lambda_0}-y|>|x-y|$, $|x|>|x_{\lambda_0}|$ and $|y|>|y_{\lambda_0}|$
        for $x, y\in\Sigma_{\lambda_0}$, \eqref{603} shows that
        $$
        \begin{aligned}
            0>& \ w(x)[|x|^{(1/p_1-1)\alpha}-|x_{\lambda_0}|^{(1/p_1-1)\alpha}]\\
            =&\ t(x)-t_{\lambda_0}(x)\\
            =&\int_{\Sigma_{\lambda_0}}(|x_{\lambda_0}-y|^{\Lambda }-|x-y|^{\Lambda })(s_{\lambda_0}^{-p_2}(y)|y_{\lambda_0}|^{(1-p_2)\beta}-s^{-p_2}(y)|y|^{(1-p_2)\beta})dy\\
            =&\int_{\Sigma_{\lambda_0}}
            (|x_{\lambda_0}-y|^{\Lambda }-|x-y|^{\Lambda })(|y_{\lambda_0}|^{(1-p_2)\beta}-|y|^{(1-p_2)\beta})s^{-p_2}(y)dy\\
            >&0.
        \end{aligned}
        $$
        This is impossible. It means that $\lambda_0=0$.

        {\bf Step 4.} Complete the proof of Theorem \ref{Rth3}.

        Steps 1-3 show that both $w$ and $s$ are symmetric about the plane $T_0$.
        Since we can orient the $x_1$ axis in any direction, $w$ and $s$ are radially symmetric about the origin.
        The monotonicity of $w$ and $s$ easily follows from the argument above.
        Thus, both $u$ and $v$ are radially symmetric and monotonically increasing about the origin.
  This completes the proof of Theorem \ref{Rth3}.

\paragraph{Acknowledgements.} This research was supported
by the Natural Science Foundation of Jiangsu (No. BK20241878).

\paragraph{Conflict of interest.}
This work does not have any conflicts of interest.

\paragraph{Data availability.}
There is no data associated with this work.

{\sc Tiantian Zhou}

Institute of Mathematics

School of Mathematical Sciences

Nanjing Normal University, Nanjing, 210023, China

Email:ztt0515@foxmail.com

\vskip 5mm

{\sc Yutian Lei}

Ministry of Education Key Laboratory of NSLSCS

School of Mathematical Sciences

Nanjing Normal University, Nanjing, 210023, China

Email: leiyutian@njnu.edu.cn

\end{document}